\documentclass[12pt,letterpaper]{article}

\usepackage{amssymb,amsfonts,amscd,amsthm}
\usepackage[all,arc]{xy}
\usepackage{enumerate, bbm}
\usepackage{mathrsfs}
\usepackage{tikz}
\usepackage[left=0.9in,top=0.9in,right=0.9in,bottom=0.9in]{geometry}
\usepackage{mathtools}
\usepackage{hyperref}
\usepackage{color}
\usepackage{graphicx}
\usepackage{enumitem} 
\graphicspath{ {TexImages/} }

\newtheorem{thm}{Theorem}[section]

\newtheorem{prop}[thm]{Proposition}
\newtheorem{lem}[thm]{Lemma}
\newtheorem{claim}[thm]{Claim}

\newtheorem{quest}[thm]{Question}

\newtheorem{conj}[thm]{Conjecture}

\theoremstyle{definition}
\newtheorem{defn}{Definition}

\hypersetup{
	colorlinks,
	citecolor=blue,
	filecolor=blue,
	linkcolor=blue,
	urlcolor=blue,
	linktocpage
}

\setenumerate[1]{label=\thesection.\arabic*.} 
\setenumerate[2]{label*=\arabic*.} 
\setlist[enumerate]{itemsep=2ex, topsep=2ex} 
\setlist[itemize]{itemsep=2ex, topsep=2ex}


\newcommand{\E}{\mathbb{E}}

\newcommand{\Om}{\Omega}

\newcommand{\pa}{\partial}

\renewcommand{\l}{\left}
\renewcommand{\r}{\right}

\newcommand{\sm}{\setminus}
\newcommand{\sub}{\subseteq}
\newcommand{\Mt}{T}

\renewcommand{\c}[1]{\mathcal{#1}}

\newcommand{\f}[2]{\frac{#1}{#2}}
\newcommand{\floor}[1]{\l\lfloor #1\r\rfloor}
\newcommand{\ceil}[1]{\l\lceil #1\r\rceil}





\parskip=6pt
\parindent=0pt

\title{Maximal Independent Sets in Clique-free Graphs}
\author{Xiaoyu He \footnote{Dept.\ of Mathematics, Stanford University {\tt alkjash@stanford.edu}. This material is based upon work supported by the National Science Foundation Graduate Research Fellowship under Grant No. DGE-1656518.}\and Jiaxi Nie \footnote{Dept.\ of Mathematics, UCSD {\tt jin019@ucsd.edu}.} \and Sam Spiro\footnote{Dept.\ of Mathematics, UCSD {\tt sspiro@ucsd.edu}. This material is based upon work supported by the National Science Foundation Graduate Research Fellowship under Grant No. DGE-1650112.}}
\date{\today}
\begin{document}
	\maketitle
	\begin{abstract}
		Nielsen proved that the maximum number of maximal independent sets (MIS's) of size $k$ in an $n$-vertex graph is asymptotic to $(n/k)^k$, with the extremal construction a disjoint union of $k$ cliques with sizes as close to $n/k$ as possible.  In this paper we study how many MIS's of size $k$ an $n$-vertex graph $G$ can have if $G$ does not contain a clique $K_t$.  We prove for all fixed $k$ and $t$ that there exist such graphs with $n^{\lfloor\frac{(t-2)k}{t-1}\rfloor-o(1)}$ MIS's of size $k$ by utilizing recent work of Gowers and B. Janzer on a generalization of the Ruzsa-Szemer\'edi problem. We prove that this bound is essentially best possible for triangle-free graphs when $k\le 4$.
	\end{abstract}

	\section{Introduction}

	If $G$ is a graph, a set $I\sub V(G)$ is said to be a \textit{maximal independent set} or simply an MIS if it is an independent set which is maximal with respect to set inclusion. Efficient algorithms for finding maximal independent sets (or equivalently, maximal cliques in the complement) have many applications
	. For example, clique  finding algorithms have been used in social network analysis to find closely-interacting community of agents in a social network~\cite{harary1957}. Other applications include bioinformatics~\cite{koch2001,SAMUDRALA1998}, information retrieval~\cite{augustson1970}, and computer vision~\cite{horaud1989}. Because of this, considerable attention has been given to the problem of efficiently generating all MIS's of a given graph, see for example~\cite{eppstein2010,johnson1988, lawler1980, makino2004, paull1959, tsukiyama1977}.

	In this paper we study extremal problems involving the maximum number of MIS's in a graph with given parameters.  The main result in this area is the following, which was proven independently by Miller and Muler~\cite{miller1960problem} and by Moon and Moser~\cite{moon1965cliques}. 
	\begin{thm}[\cite{miller1960problem,moon1965cliques}]\label{thm:MoonMoser}
		Let $m(n)$ denote the maximum number of MIS's in an $n$-vertex graph.  If $n\ge 2$, then
		\[m(n)=\begin{cases}
			3^{n/3} & n\equiv 0\mod 3,\\ 
			4\cdot 3^{(n-4)/3} & n\equiv 1\mod 3,\\ 
			2\cdot 3^{(n-2)/3} & n\equiv 2\mod 3.
		\end{cases}\]
	\end{thm}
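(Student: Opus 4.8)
The plan is to establish the lower and upper bounds by separate arguments, the former by explicit constructions and the latter by induction on $n$.

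\emph{Lower bound via explicit graphs.} For $n\equiv 0\pmod 3$ take the disjoint union of $n/3$ triangles: an independent set is maximal exactly when it contains one vertex of each triangle, so there are $3^{n/3}$ MIS's. For $n\equiv 1\pmod 3$ replace one triangle by a $K_4$ (whose MIS's are its four vertices), and for $n\equiv 2\pmod 3$ use $(n-2)/3$ triangles plus a single extra edge; since the MIS's of a disjoint union are the unions of MIS's of the components, these give $4\cdot 3^{(n-4)/3}$ and $2\cdot 3^{(n-2)/3}$ respectively.

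\emph{Upper bound by induction on $n$.} Write $M(n)$ for the claimed formula, with the convention $M(0)=M(1)=1$ (matching $m(0)=m(1)=1$). The crux is a pruning step: if $G$ has $n\ge 1$ vertices and a vertex $v$ of minimum degree $\delta$, then every MIS of $G$ meets the closed neighborhood $N[v]$ (else it could be enlarged by $v$), and for each $w\in N[v]$ the MIS's of $G$ containing $w$ are in bijection, via $I\mapsto I\setminus\{w\}$, with the MIS's of $G-N[w]$, of which there are at most $m(n-1-\deg w)\le m(n-1-\delta)$. Summing over the $\delta+1$ vertices of $N[v]$ gives $m(G)\le(\delta+1)\,m(n-1-\delta)$, and this subsumes the isolated-vertex case $\delta=0$. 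Then, using the inductive hypothesis $m(k)=M(k)$ for $k<n$ (which also supplies the monotonicity of $m$ on small arguments used above), it remains to check the purely arithmetic inequality $(\delta+1)\,M(n-1-\delta)\le M(n)$ for every $0\le\delta\le n-1$: rewriting each $M$-value as $c\cdot 3^{k/3}$, this is a short finite computation, with $\delta=2$ yielding equality and corresponding to peeling off a triangle.

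\emph{Where the work is.} The neighborhood-pruning bound $m(G)\le(\delta+1)\,m(n-1-\delta)$ is the whole conceptual content and is quick. The genuinely fiddly part is confirming that none of the competing moves — peeling off a clique-plus-neighborhood of size $\delta+1\ne 3$, or deleting an isolated vertex — ever beats repeated use of the triangle; this requires splitting into the residue classes $n\bmod 3$ and tracking the small multiplicative constants $4\cdot 3^{-4/3}$ and $2\cdot 3^{-2/3}$, together with a direct check of the base cases (say $n\le 4$) to close the induction.
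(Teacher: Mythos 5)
The paper states Theorem~\ref{thm:MoonMoser} as a cited classical result of Miller--Muller and Moon--Moser and gives no proof of it, so there is no in-paper argument to compare against. Judged on its own, your proposal is the standard and correct route: the bijection $I\mapsto I\setminus\{w\}$ between MIS's of $G$ through a fixed $w$ and MIS's of $G-N[w]$ is sound (maximality transfers in both directions because every vertex outside $N[w]$ that is non-adjacent to $I\setminus\{w\}$ is also non-adjacent to $w$), summing over $w\in N[v]$ for $v$ of minimum degree gives $m(G)\le(\delta+1)m(n-1-\delta)$, and the residue-class arithmetic does close up, with equality at $\delta=2$ and also at $\delta\in\{1,3\}$ in the appropriate residue classes (matching the alternative extremal graphs built from $K_2$'s and $K_4$'s). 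Two small points worth making explicit if you write this out: the monotonicity $m(k)\le m(k+1)$ is cleanest to justify directly by adjoining an isolated vertex rather than reading it off the inductive formula; and when $n-1-\delta\in\{0,1\}$ you should use the conventions $m(0)=m(1)=1$ rather than the displayed formula (which, as you note, would give $4/3$ at $n=1$), together with a check that $n\le M(n)$ for $n\ge 2$ to cover the case $G=K_n$.
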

	The lower bound when $3|n$ is achieved by a disjoint union of triangles.  It was shown by Hujter and Tuza~\cite{hujtera1993number} that the number of MIS's drops considerably for triangle-free graphs.
	
	\begin{thm}[\cite{hujtera1993number}]\label{thm:HujterTuza}
		Let $m_3(n)$ denote the maximum number of MIS's in an $n$-vertex triangle-free graph.  If $n\ge 4$, then
		\[m_3(n)=\begin{cases}
			2^{n/2} & n\equiv 0\mod 2,\\ 
			5\cdot 2^{(n-5)/2} & n\equiv 1\mod 2.
		\end{cases}\]
	\end{thm}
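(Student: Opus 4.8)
Write $\mu(H)$ for the number of MIS's of a graph $H$, and recall $\mu$ is multiplicative over connected components. If $n$ is even, let $G$ be a perfect matching ($n/2$ disjoint copies of $K_2$); it is triangle-free, and an MIS is exactly a choice of one endpoint from each edge, so $\mu(G)=2^{n/2}$. If $n$ is odd, let $G$ be one copy of $C_5$ together with $(n-5)/2$ copies of $K_2$; since $\mu(C_5)=5$ (the five pairs of non-adjacent vertices of the pentagon), $\mu(G)=5\cdot 2^{(n-5)/2}$. Both graphs are triangle-free, so $m_3(n)$ is at least the claimed quantity.

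\textbf{Upper bound, easy cases.}
The matching upper bound I would prove by strong induction on $n$, checking $n=4,5$ and a handful of further small values by hand. Two trivial facts about the target function $m_3$ get used throughout: it is nondecreasing, and $2m_3(n-2)=m_3(n)$. The basic tool is that for any vertex $v$, the map $I\mapsto I\setminus\{v\}$ is a bijection from the MIS's of $G$ containing $v$ onto the MIS's of $G-N[v]$ (where $N[v]$ is the closed neighborhood), while every MIS omitting $v$ must contain a neighbor of $v$. Now let $v$ have minimum degree. If $\deg v=0$ then $v$ lies in every MIS and $\mu(G)=\mu(G-v)\le m_3(n-1)\le m_3(n)$. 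If $\deg v=1$, say $N(v)=\{w\}$, then every MIS contains $v$ or (being otherwise unable to dominate $v$) contains $w$, and these are mutually exclusive, so $\mu(G)=\mu(G-N[v])+\mu(G-N[w])$; as $G-N[v]$ has $n-2$ vertices and $G-N[w]$ has at most $n-2$, induction gives $\mu(G)\le 2m_3(n-2)=m_3(n)$. Note this case is exactly tight, matching the extremal matchings.

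\textbf{The main case: minimum degree $\ge 2$.}
This is the crux. Triangle-freeness enters through two structural facts: for an edge $xy$ one has $|N[x]\cup N[y]|=\deg x+\deg y$ (no common neighbors), and for every vertex $v$ the set $N(v)$ is independent. I would fix a minimum-degree vertex $v$, order its neighbors $v_1,\dots,v_d$ ($d=\deg v\ge 2$), and split the MIS's of $G$ into those containing $v$ — counted by $\mu(G-N[v])$ on $n-1-d$ vertices — and, for each $i$, those containing $v_i$ but none of $v,v_1,\dots,v_{i-1}$, which inject into the MIS's of $G-N[v_i]$ avoiding $v_1,\dots,v_{i-1}$ (on $n-1-\deg v_i$ vertices, since none of $v_1,\dots,v_{i-1}$ is adjacent to $v_i$). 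The useful observation is that in $G-N[v_i]$ the common neighbor $v$ is gone, so every remaining $v_j$ has strictly smaller degree; in particular a degree-$2$ neighbor of $v$ becomes a leaf or isolated vertex there, and any MIS avoiding it is forced to take its unique surviving neighbor, which in turn deletes more vertices. Iterating this cascade, together with the unconditional three-way bound $\mu(G)\le\mu(G-N[x])+\mu(G-N[y])+\mu(G-\{x,y\})$ applied to a well-chosen edge, one shows the recursion closes whenever $G$ contains a vertex or edge of sufficiently large degree.

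\textbf{Residual families and the obstacle.}
Two restricted families survive the above and must be handled directly. If $G$ is bipartite with sides $X,Y$, then an MIS $I$ is determined by $I\cap X$, since maximality forces $I\cap Y=\{y\in Y:N(y)\cap I=\emptyset\}$; hence $\mu(G)\le 2^{\min(|X|,|Y|)}\le 2^{\lfloor n/2\rfloor}\le m_3(n)$. The remaining graphs are disjoint unions of cycles of length $\ge 4$, for which $\mu(C_\ell)$ satisfies the Padovan-type recursion $\mu(C_\ell)=\mu(C_{\ell-2})+\mu(C_{\ell-3})$, giving by a short check $\mu(C_\ell)\le 2^{\ell/2}$ for $\ell\ge 4$ and $\mu(C_\ell)\le 5\cdot 2^{(\ell-5)/2}$ for odd $\ell\ge 5$; multiplicativity then yields $\mu(G)\le m_3(n)$. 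I expect the main obstacle, and indeed most of the work, to be the minimum-degree-$\ge 2$ case: the recursion constants are paper-thin (often equalities), so one must squeeze out every legal vertex deletion and isolate exactly these low-degree sub-families, whereas the bipartite and cycle computations and the low-degree cases are routine.
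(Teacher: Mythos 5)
The paper does not actually prove this statement; it is quoted from Hujter and Tuza \cite{hujtera1993number}, so there is no in-paper proof to compare against. Evaluating your attempt on its own merits: the lower bound, the $\deg v\le 1$ cases, the bipartite argument, and the cycle computation are all fine, and the outline (induction with the vertex-deletion recursion $\mu(G)=\mu(G-N[v])+\mu(G-N[w])$ and the three-way edge bound) is the correct family of tools. However, the crux — minimum degree $\ge 2$ — is not a proof but a gesture at one, and the place you hand-wave is exactly where the result is tight and where the real difficulty lies.

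Concretely, your claim that ``the recursion closes whenever $G$ contains a vertex or edge of sufficiently large degree'' fails on its face for the three-way bound. Take an edge $xy$ with $\deg x=3$, $\deg y=2$. Then $\mu(G)\le m_3(n-4)+m_3(n-3)+m_3(n-2)$, which for even $n$ equals $(4+5+8)\cdot 2^{(n-8)/2}=17\cdot 2^{(n-8)/2}$, strictly larger than $m_3(n)=16\cdot 2^{(n-8)/2}$. Even the symmetric $\deg x=\deg y=3$ case only gives equality, leaving zero slack to absorb the configurations with a degree-$2$ endpoint. So the ``cascade'' you invoke (degree-$2$ vertices becoming leaves after deletion and forcing further deletions) is not a refinement you can afford to sketch — it is the entire content of the upper-bound proof, and it is where Hujter and Tuza's argument actually does its work. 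Moreover, the assertion that only two ``residual families'' (bipartite graphs and disjoint unions of cycles) survive your recursion is never established: nothing in your case analysis shows that a non-bipartite triangle-free graph with minimum degree $2$ and some vertex of degree $\ge 3$ (e.g.\ the Petersen graph, or a $C_5$ with pendant structure attached after the low-degree reductions) falls under any case you have actually closed. Until the degree-$2$ case and this classification are written out, the proposal is a plan, not a proof.
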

	We note that Theorem~\ref{thm:HujterTuza} has found a number of applications in recent years.  In particular, it was utilized by Balogh and Pet\v{r}\'i\v{c}kov\'a~\cite{balogh2014number} to count the number of maximal triangle-free graphs on $n$ vertices, and a variant of Theorem~\ref{thm:HujterTuza} was used by Balogh, Liu,  Sharifzadeh, and Treglow~\cite{balogh2018sharp} to count the number of maximal sum-free subsets of the first $n$ integers.

	Many other variants of Theorem~\ref{thm:MoonMoser} have been studied, with this including work of Wilf~\cite{wilf1986} who determined the maximum number of MIS's in trees, and by F\"uredi~\cite{furedi1987} and Griggs, Grinstead and Guichard~\cite{griggs1988} who determined the maximum number of MIS's in connected graphs.  Of particular interest to us is the following result of Nielsen~\cite{nielsen2002number}, which considers maximizing the number of MIS's of a given size.
	
	\begin{thm}[\cite{nielsen2002number}]\label{thm:Nielsen}
		Let $m(n,k)$ denote the maximum number of MIS's of size $k$ that an $n$-vertex graph can have.  If $s\in \{0,1,\ldots,k-1\}$ with $n\equiv s\mod k$, then
		\[m(n,k)=\floor{n/k}^{k-s}\ceil{n/k}^{s}.\]
	\end{thm}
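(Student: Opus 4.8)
The plan is to prove matching lower and upper bounds, after first reformulating the target quantity. Write $n = qk+s$ with $q=\floor{n/k}$ and $0\le s\le k-1$, and set $f(n,k):=\floor{n/k}^{k-s}\ceil{n/k}^{s}$ (with the convention $f(n,k)=0$ when $n<k$). The first observation, a classical fact I would record with the standard exchange argument, is that $f(n,k)=\max\{\prod_{i=1}^k x_i : x_i\in\Z_{\ge 1},\ \sum_i x_i=n\}$ whenever $n\ge k\ge 1$: in any maximizing composition, two parts differing by at least $2$ could be pushed one unit closer together to strictly increase the product, so a maximizer is forced to be the near-equal composition. For the lower bound, take $G$ to be the disjoint union of $k$ cliques, $s$ of them on $q+1$ vertices and $k-s$ on $q$ vertices. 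Since $G$ is a disjoint union, an independent set meets each clique in at most one vertex, and maximality forces it to meet each clique in exactly one vertex (any uncovered clique could be extended into). Hence the MIS's of $G$ are exactly the transversals of the $k$ cliques, all of size $k$, and there are $\prod_i|C_i| = f(n,k)$ of them; so $m(n,k)\ge f(n,k)$.

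For the upper bound I would show $m(n,k)\le f(n,k)$ by induction on $n$ (for all $k$ simultaneously), using a Moon--Moser type closed-neighborhood deletion. Given an $n$-vertex graph $G$, pick a vertex $v$ of minimum degree $\delta$ with neighbors $u_1,\dots,u_\delta$. If $I$ is an MIS of size $k$ with $v\in I$, then $I\setminus\{v\}$ is an MIS of size $k-1$ in $G-N[v]$ (maximality is inherited because $v$ dominates nothing outside $N[v]$), and $I\mapsto I\setminus\{v\}$ is injective, so there are at most $m(n-1-\delta,\,k-1)$ such $I$. If $v\notin I$, maximality forces $I$ to contain some neighbor $u_i$, and the same deletion bounds the number of size-$k$ MIS's through a fixed $u_i$ by $m(n-1-\deg u_i,\,k-1)$; a union bound over $i$ handles all such $I$. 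Applying the inductive hypothesis termwise, $m(n,k)\le f(n-1-\delta,\,k-1)+\sum_{i=1}^\delta f(n-1-\deg u_i,\,k-1)\le (\delta+1)\,f(n-1-\delta,\,k-1)$, where the last step uses $\deg u_i\ge\delta$ together with the evident monotonicity of $f(\cdot,\,k-1)$ in its first argument.

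To close the induction it remains to check $(\delta+1)\,f(n-1-\delta,\,k-1)\le f(n,k)$. This is immediate from the reformulation of $f$: the left-hand side is the product of the $k$ positive integers consisting of $\delta+1$ and the near-equal $(k-1)$-part composition of $n-1-\delta$, and these sum to $n$, so the product is at most $f(n,k)$; and if $n-1-\delta<k-1$ the left-hand side is $0$ and there is nothing to prove. The step I expect to require the most care is not any inequality but the bookkeeping of the degenerate cases that anchor the induction: $k=1$ (where $m(n,1)=n$ and the ``size $k-1$'' subproblem collapses) should be treated directly; $n<k$ gives $m(n,k)=0=f(n,k)$; and the case $\delta=0$ (an isolated vertex $v$, where every MIS contains $v$) must be verified to slot into the same template. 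Once these are pinned down and the deletion step is stated cleanly, the rest is routine.
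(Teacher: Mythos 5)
The paper cites this theorem to Nielsen and does not reprove it, so there is no internal proof to compare against. Your argument is correct and is the standard Moon--Moser--style route: the lower bound is the disjoint union of $k$ near-equal cliques, and the upper bound is a minimum-degree closed-neighborhood deletion combined with the elementary fact that $f(n,k)=\floor{n/k}^{k-s}\ceil{n/k}^s$ is the maximum product of $k$ positive integers summing to $n$. The key steps all check out: the map $I\mapsto I\setminus\{v\}$ is injective and lands on $(k-1)$-MIS's of $G-N[v]$ because maximality is inherited (any $w\notin N[v]$ uncovered by $I\setminus\{v\}$ would already have been uncovered by $I$); the monotonicity of $f(\cdot,k-1)$ lets you replace each $\deg u_i$ by $\delta$; and $(\delta+1)f(n-1-\delta,k-1)\le f(n,k)$ follows directly from the composition reformulation, with the degenerate case $n-1-\delta<k-1$ giving $0$ on the left. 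Your identification of the anchor cases ($k=1$, $n<k$, $\delta=0$) is exactly the bookkeeping needed to close the induction. One small expository point: the reduction from $(n,k)$ to various $(n',k-1)$ means the induction is most cleanly organized on $n$ (or on $n+k$), with the base handled by $k=1$ together with the trivial range $n<k$; you gesture at this correctly but a final write-up should state the induction parameter explicitly.
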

	The lower bound of this theorem comes from the disjoint union of $k$ cliques of sizes as close to $n/k$ as possible.  
	
	In this paper, we study the ``intersection'' of the problems of Hujter and Tuza~\cite{hujtera1993number} and of Nielsen~\cite{nielsen2002number}. 
	To this end, we define $m_t(n,k)$ to be the maximum number of MIS's of size $k$ in a $K_t$-free graph on $n$ vertices.  It seems that understanding $m_t(n,k)$ in general is significantly more difficult than the problems we have discussed up to this point.  In fact, we do not even know the order of magnitude of this function when $k$ and $t$ are fixed.  The best general lower bounds we can give are the following.
	
	\begin{thm}\label{thm:constructions}	Let $k\ge 1,\ t\ge 3$ be fixed and let $n\ge k$.
	\begin{enumerate}

	    \item[(a)] If $t=3$ and $k<4$, then
	           \[m_3(n,k)=\Om(n^{\floor{\f{k}{2}}}),\]
	    		and if $t\ge 4$ and $k<2(t-1)$, then
	    		\[m_t(n,k)\ge n^{\floor{\f{(t-2)k}{t-1}}-o(1)}.\]
	    		
	    \item[(b)] If $t=3$ and $k\ge4$, then
            \[m_3(n,k)=\Om(n^{\f{k}{2}}),\]
            and if $t\ge 4$ and $k\ge2(t-1)$, then
	    	\[m_t(n,k)\ge n^{\f{(t-2)k}{t-1}-o(1)}.\]
	\end{enumerate}
	\end{thm}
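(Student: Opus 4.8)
The plan is to produce, for each fixed $k$ and $t$, a $K_t$-free graph on $n$ vertices with many MIS's of size $k$ by combining a dense $K_t$-free ``host'' structure with the recent Ruzsa--Szemer\'edi-type results of Gowers and B.\ Janzer. The key idea is that an MIS of size $k$ in a graph built as a disjoint union of cliques is just a transversal picking one vertex from each clique, and the ``maximality'' condition can be controlled by insisting that the chosen vertices dominate everything; if instead we blow up a carefully chosen sparse hypergraph we can encode many such transversals while keeping the clique number small. Concretely, I would start from a $k$-partite structure on vertex classes $V_1,\dots,V_k$, where within certain blocks of classes we place $K_{t-1}$'s (the densest $K_t$-free clusters available), and across blocks we use a near-perfect packing of ``independent transversals'' coming from a Gowers--Janzer configuration. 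Each such transversal, together with a few fixed dominating vertices, becomes an MIS of size exactly $k$.

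The main steps, in order, are as follows. First, reduce to the case where $k$ is a multiple of $t-1$ (in regime (b)) or handle the remainder separately by a product construction, so that $k=(t-1)q$ and we aim for $n^{(t-2)q-o(1)}=n^{(t-2)k/(t-1)-o(1)}$ MIS's. Second, partition the $k$ color classes into $q$ groups of size $t-1$; within each group we will place a structure all of whose MIS's of size $t-1$ are ``spread out,'' and the clique number will be $t-1<t$ because each group only ever contributes an independent set of size $t-1$ internally while the groups are made mutually non-adjacent in a controlled way. Third, invoke the Gowers--Janzer theorem to get a $(t-1)$-uniform, $n$-vertex hypergraph that is the union of $n^{(t-2)/(t-1)-o(1)}$ ``induced matchings'' (linear in the appropriate sense) whose edges we can realize as independent transversals; taking the $q$-fold product of such hypergraphs yields $n^{(t-2)q/(t-1)-o(1)}$ candidate sets. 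Fourth, attach a bounded-size gadget of forced vertices (or argue directly) to guarantee maximality of each candidate independent set, and check that no $K_t$ was created in the process — this is where the $K_{t-1}$ within-block structure and the sparsity of the cross-block hypergraph must be balanced. Finally, count: the number of transversals surviving as MIS's is $n^{(t-2)k/(t-1)-o(1)}$, and in regime (a) the floor appears because $k<2(t-1)$ forces at most one ``full'' block, so the exponent is $\lfloor (t-2)k/(t-1)\rfloor$ rather than $(t-2)k/(t-1)$ exactly.

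For the $t=3$ special cases I would make the Gowers--Janzer input explicit: when $t=3$ the relevant object is a bipartite graph that is a union of $\Om(n)$ induced matchings, i.e.\ essentially a Ruzsa--Szemer\'edi / Behrend-type construction, and the $q$-fold product (with $q=\lfloor k/2\rfloor$ or $q=k/2$) gives $\Om(n^{\lfloor k/2\rfloor})$ respectively $\Om(n^{k/2})$ triangle-free graphs' worth of MIS's; here the $o(1)$ can be removed because for $t=3$ the Behrend bound only costs a subpolynomial factor in one coordinate and we may simply absorb it, giving the cleaner $\Om(\cdot)$ statements. The main obstacle, I expect, is simultaneously (i) ensuring genuine \emph{maximality} of each independent transversal — a transversal that fails to dominate some vertex does not count, so the dominating gadget must be designed without either destroying $K_t$-freeness or merging distinct transversals into the same MIS — and (ii) verifying $K_t$-freeness of the final graph, since we are layering $(t-1)$-cliques on top of a hypergraph blow-up and must rule out a stray edge completing a $K_t$ across a block boundary. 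Both are ``local'' checks, but getting the quantitative bookkeeping to yield exactly the exponent $\lfloor (t-2)k/(t-1)\rfloor$ (resp.\ $(t-2)k/(t-1)$) with only an $n^{o(1)}$ loss is the delicate part, and it is precisely there that the strength of the Gowers--Janzer packing (rather than a weaker Ruzsa--Szemer\'edi bound) is needed.
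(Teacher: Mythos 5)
You correctly identify the Gowers--Janzer theorem as the key input, and the idea of passing to the multipartite complement so that the $K_{t-1}$'s of a Gowers--Janzer graph become transversal maximal independent sets is exactly Lemma~\ref{lem:conGowers} of the paper. Your construction --- group the $k$ vertex classes into disjoint blocks of $t-1$ classes, put a Gowers--Janzer piece in each block, make the blocks mutually non-adjacent, and take the product --- is precisely the paper's proof of part~(a): a disjoint union of $q=\lfloor k/(t-1)\rfloor$ copies of $G_{t-1}$ together with one copy of $G_s$ for the remainder $s=k-(t-1)q$. But this construction yields only $n^{(t-2)q+s-1-o(1)}=n^{\lfloor (t-2)k/(t-1)\rfloor - o(1)}$ MIS's (each full block contributes exponent $t-2$; the remainder block contributes only $s-1$), and when $t-1\nmid k$ this is \emph{strictly less} than the target $n^{(t-2)k/(t-1)-o(1)}$ of part~(b). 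Your proposed ``reduction to $k$ a multiple of $t-1$'' does not close this gap, since appending a disjoint remainder block always incurs the $-1$ loss. There is also an internal inconsistency in your bookkeeping: step~3 claims the $q$-fold product produces $n^{(t-2)q/(t-1)-o(1)}$ candidates, which is off by a factor of $t-1$ in the exponent and does not match the $n^{(t-2)k/(t-1)-o(1)}$ you assert in step~5.

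The idea you are missing for part~(b) is the \emph{fractional-matching blowup of the tight cycle} $TC^{t-1}_k$. Instead of partitioning $[k]$ into disjoint $(t-1)$-tuples, the tight cycle provides $k$ overlapping $(t-1)$-edges with each vertex class in exactly $t-1$ of them, and the constant fractional matching $M\equiv 1/(t-1)$ has total weight exactly $k/(t-1)$ with no rounding. The blowup $G_n[TC^{t-1}_k,M]$ carries a Gowers--Janzer piece $G_e$ on parts of size $n^{1/(t-1)}$ for each of the $k$ edges, so each $G_e$ contributes $n^{(t-2)/(t-1)-o(1)}$ transversals and the product over all $k$ edges is $n^{k(t-2)/(t-1)-o(1)}$. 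The maximality check (Lemma~\ref{lem:blowupconstruction}) works because the projection of any candidate to each $G_e$ is a transversal MIS of $G_e$, and $K_t$-freeness follows from Lemma~\ref{lem:cliquefree} showing the shadow $\partial TC^{t-1}_k$ is $K_t$-free once $k\ge 2(t-1)$. This cyclic overlapping is exactly what removes the floor, and there is no way to achieve it with mutually non-adjacent blocks. Finally, a small point: for $t=3$ the cleaner $\Omega(\cdot)$ bound comes from using a comatching as the $r=2$ Gowers--Janzer object (no Behrend construction is needed at all), not from ``absorbing'' a subpolynomial loss in one coordinate.
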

	
	We suspect that the bounds of Theorem~\ref{thm:constructions} are essentially tight, see Conjecture~\ref{conj:main}. 
	
	Our proof of Theorem~\ref{thm:constructions} hinges on a recent construction of Gowers and B. Janzer~\cite{gowers2020generalizations} which generalizes a celebrated result of Ruzsa and Szemer\'edi~\cite{ruzsa1978}. Ruzsa and Szemer\'edi used Behrend's construction of large arithmetic progression-free sets to construct a graph on $n$ vertices and $n^2 e^{-O(\sqrt{\log n})}$ edges where each edge lies in at most one triangle. 
	Gowers and B. Janzer generalized this to show that for any $1\le r < s$, there is a graph on $n$ vertices with $n^r e^{-O(\sqrt{\log n})}$ copies of $K_s$ such that every $K_r$ lies in at most one $K_s$. This construction is one of the main building blocks in our proof of Theorem~\ref{thm:constructions}. We obtain the same type of error term, so the $n^{-o(1)}$ errors can be improved to $e^{-O(\sqrt{\log n})}$ in the statement of Theorem~\ref{thm:constructions}.

	We now turn our attention to upper bounds on $m_t(n,k)$. It is straightforward to prove that our lower bound from Theorem~\ref{thm:constructions}  when $k<t$ is tight up to the $o(1)$ term.
	\begin{prop}\label{prop:smallK}
		For any fixed $k<t$, we have \[m_t(n,k)=O(n^{\floor{\f{(t-2)k}{t-1}}})=O(n^{k-1}).\]
	\end{prop}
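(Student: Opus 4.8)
The plan is to show that for $k<t$ the exponent $\floor{\f{(t-2)k}{t-1}}$ equals $k-1$, and then to prove directly that an $n$-vertex $K_t$-free graph has at most $O(n^{k-1})$ maximal independent sets of size $k$ by a short double-counting argument. For the first point: writing $\f{(t-2)k}{t-1}=k-\f{k}{t-1}$ and using $1\le k\le t-1$ we get $0<\f{k}{t-1}\le 1$, so $k-1\le \f{(t-2)k}{t-1}<k$ and hence $\floor{\f{(t-2)k}{t-1}}=k-1$. So it suffices to bound the number of size-$k$ MIS's in an arbitrary $n$-vertex $K_t$-free graph $G$ by $O(n^{k-1})$.

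The key step is the claim that every independent set $J\sub V(G)$ with $|J|=k-1$ is contained in at most $t-1$ maximal independent sets of size $k$. To prove it, suppose $J\cup\{a\}$ and $J\cup\{b\}$ are both size-$k$ MIS's with $a\ne b$ (note $a,b\notin J$ since these sets have $k$ elements). Since $J\cup\{a\}$ is independent, $a$ has no neighbor in $J$; since $J\cup\{b\}$ is a \emph{maximal} independent set and $a\notin J\cup\{b\}$, the vertex $a$ must have a neighbor in $J\cup\{b\}$, which therefore must be $b$. Hence the set $W_J=\{v\notin J:\, J\cup\{v\}\text{ is a size-}k\text{ MIS}\}$ is a clique of $G$, so $|W_J|\le t-1$ because $G$ is $K_t$-free.

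Finally I would double-count the pairs $(J,I)$ where $I$ is a size-$k$ MIS of $G$ and $J$ is a $(k-1)$-subset of $I$. Every such $J$ is independent, being contained in the independent set $I$, so the claim bounds the number of valid $I$ for each fixed $J$ by $t-1$; since there are at most $\binom{n}{k-1}$ sets $J$ and each size-$k$ MIS $I$ is obtained from exactly $\binom{k}{k-1}=k$ choices of $J$, we conclude that the number of size-$k$ MIS's of $G$ is at most $\f{t-1}{k}\binom{n}{k-1}=O(n^{k-1})$, which is $O(n^{\floor{(t-2)k/(t-1)}})$ by the first paragraph. There is essentially no real obstacle here, matching the paper's remark that this is straightforward; the one point requiring care is that the claim genuinely exploits maximality of the independent sets (not merely independence), as that is exactly what forces $W_J$ to be a clique and lets the $K_t$-freeness bite.
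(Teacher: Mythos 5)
Your proof is correct, and it takes essentially the same route as the paper: fix $k-1$ vertices of a $k$-MIS and bound the number of ways to complete them by observing that the possible completion vertices form a clique (this is precisely what the paper packages as its Lemma~\ref{lem:t1} on $m_t(n,1)$). The only cosmetic differences are that you spell out the computation $\floor{(t-2)k/(t-1)}=k-1$ explicitly and divide by the overcount of $k$ in the double count, neither of which changes the substance.
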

	
	Proving upper bounds on $m_t(n,k)$ in general seems like a very difficult problem.  However, when $t=3$ we are able to determine the order of $m_3(n,k)$ for $k\le 4$, and even an exact result for $m_3(n,2)$.  To state our result, we define a {\it comatching of order $n$} to be a complete bipartite graph $K_{\floor{n/2},\ceil{n/2}}$ minus a matching of size $\floor{n/2}$.
    \begin{thm}\label{thm:triangle-free}
		For $n\ge 8$ we have
		\[m_3(n,2)=\floor{n/2},\]
		and the unique graph achieving this bound is a comatching of order $n$.  Moreover, we have
		\begin{align*}m_3(n,3)&=\Theta(n),\\ m_3(n,4)&=\Theta(n^2).\end{align*}
	\end{thm}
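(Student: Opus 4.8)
The lower bounds require no new work: they are the cases $t=3$ of Theorem~\ref{thm:constructions}, part (a) giving $m_3(n,3)=\Om(n)$ and part (b) giving $m_3(n,4)=\Om(n^2)$, while the comatching of order $n$ has $\floor{n/2}$ MIS's of size $2$, namely its $\floor{n/2}$ deleted matching edges, each of which is readily checked to be independent and maximal. So the plan is to prove the three matching upper bounds, together with exactness and uniqueness for $k=2$. Write $N[v]=N(v)\cup\{v\}$, and for a triangle-free graph $G$ and $v\in V(G)$ set $W_v=V(G)\sm N[v]$; let $M_j(G)$ be the number of MIS's of $G$ of size $j$. The common backbone is the elementary bijection, for fixed $v$, $J\mapsto J\cup\{v\}$ between MIS's of $G[W_v]$ and MIS's of $G$ containing $v$; summing over $v$ and counting each MIS once for each of its elements yields the identity $k\,M_k(G)=\sum_{v\in V(G)}M_{k-1}(G[W_v])$, which I will use repeatedly.

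For $k=2$ I would argue directly. An MIS of size $2$ is a non-edge that dominates $V(G)$; let $H$ be the graph on $V(G)$ whose edges are these dominating non-edges, so the task is to bound $|E(H)|$. Triangle-freeness of $G$ forces $\Delta(H)\le 2$ (three $H$-neighbours of a common vertex would be pairwise adjacent in $G$, a triangle) and likewise that $H$ is triangle-free. If $\Delta(H)\le1$ then $H$ is a matching and $|E(H)|\le\floor{n/2}$. If instead some $c$ has $H$-neighbours $a,b$, then from the fact that $\{a,c\}$ and $\{b,c\}$ both dominate I get $ab\in E(G)$ and $N_G(c)=V(G)\sm\{a,b,c\}$; as $N_G(c)$ is then a very large independent set, every $s\in N_G(c)$ must satisfy $N_G(s)\sub\{a,b,c\}$, which forbids any $H$-edge inside $N_G(c)$ and, after a short further case analysis, pins $|E(H)|\le 3$. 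Since $n\ge8$ gives $\floor{n/2}\ge4>3$, this yields $m_3(n,2)\le\floor{n/2}$ and shows equality is possible only when $H$ is a near-perfect matching $\{u_iv_i\}$. For uniqueness, two disjoint such edges force $N_G(u_1)\cap N_G(v_1)=\emptyset$ (a common neighbour $x$ would have $N_G(x)=\{u_1,v_1\}$ and so could not be dominated by a pair disjoint from $\{u_1,v_1\}$); hence $X=N_G(u_1)\cup\{v_1\}$ and $Y=N_G(v_1)\cup\{u_1\}$ are independent sets partitioning $V(G)$, so $G$ is bipartite, and a connectivity/parity argument forces every $\{u_i,v_i\}$ to cross $(X,Y)$ with each endpoint joined to all of the opposite side but its partner; that is, $G=K_{|X|,|Y|}$ with the matching deleted, a comatching.

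For $k=3$, the identity gives $3M_3(G)=\sum_v d_v$, where $d_v:=M_2(G[W_v])$ is the number of dominating non-edges of the triangle-free graph $G[W_v]$. The estimate $d_v\le\floor{|W_v|/2}$ coming from the $k=2$ bound is alone only worth $O(n^2)$; the extra ingredient I would use is that the $k=2$ analysis also shows that whenever $d_v\ge4$ the domination graph of $G[W_v]$ cannot have maximum degree $2$ (that case gives at most $3$ edges), so it is a matching with at least two disjoint edges, whence by the uniqueness-type argument above $G[W_v]$ is bipartite and in fact close to a comatching. The crux is then to show that the set $S=\{v:d_v\ge4\}$ has bounded size $|S|=O(1)$: these vertices play the role of the two apex vertices in the extremal example (a comatching joined disjointly to a $K_2$), and one must show that two of them cannot coexist, since the near-comatching structures they impose on $G$ are incompatible. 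Granting $|S|=O(1)$, one has $\sum_{v\in S}d_v\le|S|\floor{n/2}=O(n)$ and $\sum_{v\notin S}d_v\le 3n$, giving $M_3(G)=O(n)$.

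For $k=4$ no new structural input is needed: applying the identity with $k=4$, $4M_4(G)=\sum_{v\in V(G)}M_3(G[W_v])\le\sum_v m_3(|W_v|,3)=\sum_v O(|W_v|)=O(n^2)$, matching the lower bound from Theorem~\ref{thm:constructions}(b). Thus the only genuinely hard point in the whole theorem is the rigidity step in the $k=3$ case --- that boundedly many vertices $v$ can have four or more dominating non-edges in $G[W_v]$ --- and everything else either reduces to this via the backbone identity or is an elementary consequence of triangle-freeness.
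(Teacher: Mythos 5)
Your $k=2$ argument is sound (and genuinely different from the paper's, which fixes one 2-MIS $\{x,y\}$, partitions $V\setminus\{x,y\}$ into $V_x,V_y,V_{x,y}$, and does a case analysis on $|V_{x,y}|$; yours instead studies the ``domination graph'' $H$ of 2-MIS's and bounds $\Delta(H)$). Your $k=4$ reduction to $k=3$ is exactly what the paper does. But the $k=3$ case --- which you yourself flag as the ``crux'' --- has a genuine gap: after reducing to the identity $3M_3(G)=\sum_v M_2(G[W_v])$ and to bounding the size of $S=\{v:M_2(G[W_v])\ge 4\}$, you assert that $|S|=O(1)$ but give no proof. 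You gesture at ``the near-comatching structures they impose on $G$ are incompatible,'' but this is not demonstrated, and it is far from obvious. Having $M_2(G[W_v])\ge4$ forces the domination graph of $G[W_v]$ to be a matching, which by your $k=2$ uniqueness analysis makes $G[W_v]$ bipartite with some comatching-like structure, but the vertex sets $W_v$ overlap in complicated ways and nothing you have written pins down a contradiction between two such structures. Since the trivial bound $M_2(G[W_v])=O(n)$ only yields $M_3(G)=O(n^2)$, the entire content of the $k=3$ upper bound lives in this unproven claim.

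For comparison, the paper sidesteps this entirely. It first reduces (Lemma~\ref{lem:transversal}, at a cost of a factor $(4k)^{-k}$) to counting \emph{transversal} MIS's in a tripartite triangle-free graph $G$ with parts $U,V,W$, and then proves $T(G,3)\le|V(G)|$ (Lemma~\ref{lem:tripartite}) via a sequence of structural claims: any pair of vertices lies in at most one transversal MIS; each $u\in U$ in at least two transversal MIS's determines a degree parameter $d(u)$ and sets $S_u,T_u$ partitioning $U\setminus\{u\}$; and an auxiliary graph on $2^U\cup2^U$ with edges labeled by $u\mapsto(S_u,T_u)$ is shown to be a \emph{forest}, which is what converts a naive $O(n^2)$ count into $O(n)$. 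That forest argument is the analogue of the rigidity step you are missing, and there is no obvious way to transplant it into your $\sum_v M_2(G[W_v])$ decomposition. As written, your proposal does not prove $m_3(n,3)=O(n)$, and hence (via your own $k=4$ reduction) does not prove $m_3(n,4)=O(n^2)$ either.
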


	The rest of this paper is organized as follows. In Section~\ref{sec:constructions}, we prove Theorem~\ref{thm:constructions} by constructing graphs which have many MIS's from certain blowup hypergraphs. In Section~\ref{sec:small} we prove Proposition~\ref{prop:smallK}, showing that Theorem~\ref{thm:constructions} is tight for $k<t$, and we also prove the first part of Theorem~\ref{thm:triangle-free}, determining the exact value of $m_3(n,2)$. In Section~\ref{sec:t3}, we focus on the case of triangle-free graphs and complete the proof of Theorem~\ref{thm:triangle-free} by determining the order of magnitude of $m_3(n,3)$ and $m_3(n,4)$.  In Section~\ref{sec:partial} we prove a partial result towards upper bounding $m_3(n,5)$, and in Section~\ref{sec:hypergraphs} we briefly consider an analogous problem for hypergraphs.  We end with some open problems and concluding remarks in Section~\ref{sec:concluding}.
	
	{\bf Notation.} We use the shorthand $k$-MIS to mean a maximal independent set of size $k$. We define a \textit{transversal MIS} in an $k$-partite graph to be a $k$-MIS containing one vertex from each part. Here, it is implied that the $k$-partite graph comes with the data of a $k$-partition $\Pi$ if more than one exists.
	
	\section{Constructions}\label{sec:constructions}
	It is not difficult to see that a comatching of size $n$ has $\floor{n/2}$ transversal 2-MIS's, essentially because almost every vertex is in a unique non-edge between the two parts.  A similar idea gives the following.
	\begin{lem}\label{lem:conGowers}
		Let $r \ge 2$ be fixed.  For all $n$, there exists a balanced $r$-partite graph $G$ on $rn$ vertices such that $G$ contains at least $n^{r-1-o(1)}$ transversal $r$-MIS's.  Moreover, when $r=2$ there exists such a graph with $\Om(n)$ transversal 2-MIS's.
	\end{lem}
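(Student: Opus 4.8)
The plan is to take $G$ to be the balanced $r$-partite blow-up of the complement of the graph produced by the Gowers--B.~Janzer construction. Concretely, I would apply the result of Gowers and B.~Janzer~\cite{gowers2020generalizations} with the pair $r-1<r$ in place of their $r<s$: for every $N$ this yields a graph $H$ on $N$ vertices with at least $N^{r-1}e^{-O(\sqrt{\log N})}$ copies of $K_r$ and with the property that every copy of $K_{r-1}$ in $H$ lies in at most one copy of $K_r$. Setting $N=n$, let $G$ be the graph on vertex set $V(H)\times[r]$ in which $(u,i)$ is joined to $(v,j)$ exactly when $i\ne j$ and $uv\notin E(H)$ (the case $u=v$ being included, since $H$ has no loops). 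Then $G$ is balanced $r$-partite with parts $V(H)\times\{i\}$ of size $n$, so it has $rn$ vertices.

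The crux is to show that every copy of $K_r$ in $H$ produces a transversal $r$-MIS of $G$. Let $S=\{(v_1,1),\dots,(v_r,r)\}$ be a transversal $r$-set of $G$. By construction $S$ is independent if and only if $v_iv_j\in E(H)$ for all $i<j$, i.e.\ if and only if $v_1,\dots,v_r$ are distinct and span a copy of $K_r$ in $H$. Assume this, and let $(u,j)$ be a vertex outside $S$, so $u\ne v_j$. Then $S\cup\{(u,j)\}$ is independent if and only if $(u,j)$ is non-adjacent in $G$ to $(v_i,i)$ for every $i\ne j$, which by the definition of $G$ says precisely that $uv_i\in E(H)$ for all $i\ne j$; equivalently, $\{v_i:i\ne j\}\cup\{u\}$ spans a copy of $K_r$ in $H$. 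But $\{v_i:i\ne j\}$ is a copy of $K_{r-1}$ contained in the $K_r$ on $\{v_1,\dots,v_r\}$, so by the Gowers--B.~Janzer property it lies in no other $K_r$; hence $u=v_j$, a contradiction. Therefore $S$ admits no extension, and being independent it is a maximal independent set.

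To finish, I would count. Each copy of $K_r$ in $H$, with any distribution of its vertices among the $r$ parts, yields such a set $S$, and distinct copies of $K_r$ yield distinct $S$ (the vertex set of $S$ projects onto the copy of $K_r$), so $G$ has at least as many transversal $r$-MIS's as $H$ has copies of $K_r$, namely at least $N^{r-1}e^{-O(\sqrt{\log N})}=n^{r-1-o(1)}$. For the ``moreover'' clause, when $r=2$ I would instead use a comatching of order $2n$: writing its transversal non-edges as $\{a_i,b_i\}$ for $i\in[n]$, each pair $\{a_i,b_i\}$ is independent and dominating — every $a_j$ with $j\ne i$ is adjacent to $b_i$, and symmetrically — hence a transversal $2$-MIS, so there are $n=\Om(n)$ of them. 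The only points that require any care are extracting the precise quantitative form of the Gowers--B.~Janzer theorem and carrying out the maximality verification through the ``$K_{r-1}$ lies in at most one $K_r$'' property; beyond this bookkeeping I anticipate no real obstacle.
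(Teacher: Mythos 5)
Your proof is correct and uses the same two key ingredients as the paper: the Gowers--B.~Janzer graph in which each $K_{r-1}$ lies in at most one $K_r$, and the observation that taking an ``$r$-partite complement'' turns each $K_r$ into a transversal $r$-MIS, with the uniqueness property delivering maximality. The one place you depart from the paper is in how the $r$-partite structure is produced: the paper invokes a \emph{balanced $r$-partite version} of the Gowers--Janzer graph on $rn$ vertices (which, as they note in a footnote, is not literally what the cited theorem gives, so they appeal to a random balanced $r$-partition to extract one), whereas you take the Gowers--Janzer graph $H$ on $n$ vertices as is and blow it up to $V(H)\times[r]$, imposing the complement condition only across parts. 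This is slightly cleaner since it avoids the probabilistic partitioning step, and it causes no trouble: the clones $(u,i),(u,j)$ with $i\ne j$ are adjacent (no loops in $H$), so a transversal independent set must choose $r$ distinct underlying vertices of $H$ that form a $K_r$, and your maximality argument then goes through verbatim. The counting and the $r=2$ comatching case also match the paper's treatment.
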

	\begin{proof}
		The case $r=2$ follows by considering a comatching.  In recent work of Gowers and B. Janzer~\cite{gowers2020generalizations}, it is shown that there exists a balanced $r$-partite\footnote{Strictly speaking their theorem statement does not guarantee that the graph is $r$-partite (let alone balanced), but one can show that such a graph exists by taking a random balanced $r$-partition of the graph they construct.} graph $G'$ on $rn$ vertices which has $n^{r-1-o(1)}$ copies of $K_{r}$, and is such that every $K_{r-1}$ of $G'$ is contained in at most one $K_r$.   Let $G$ be the $r$-partite graph on the same vertex set as $G'$ where an edge is added between two vertices $u,v$ if and only if they are in different parts and if $uv\notin E(G')$ (that is, $G$ is the ``$r$-partite complement'' of $G'$).  For example, if $G'$ was a comatching, then $G$ would be a matching.
		
		We claim that any set $\{u_1,\ldots,u_r\}$ which is a vertex set of a $K_r$ in $G'$ is a transversal $r$-MIS of $G$.  Indeed, each $u_i$ vertex necessarily comes from a distinct part of $G'$ since $G'$ is $r$-partite, and this set is an independent set of $G$ by construction.  Assume for contradiction that this set is not maximal, i.e. that $\{u,u_1,\ldots,u_r\}$ is also independent for some new vertex $u$.  If $u$ is in the same part as $u_i$, then this means that the set $S=\{u_1,\ldots,u_r\}\sm \{u_i\}$ forms a $K_{r-1}$ and that $S\cup \{u_i\},S\cup \{u\}$ are distinct $K_r$'s in $G'$ containing $S$.  This contradicts how $G'$ was constructed, so we conclude that every such set is a transversal $r$-MIS.  The result follows since $G'$ contains $n^{r-1-o(1)}$ copies of $K_r$.
	\end{proof}
	
	With this we can now prove Theorem~\ref{thm:constructions}(a), which handles the case that $k<2(t-1)$.  Here and throughout this section we construct graphs $G$ with $\Theta_{k,t}(n)$ vertices for ease of presentation.  These can easily be turned into $n$-vertex constructions with asyptotically as many MIS's by either randomly deleting a constant fraction of the vertices, or by adding dummy vertices which are adjacent to all the vertices in a given part of $G$ (assuming $G$ is $k$-partite).
	
	\begin{proof}[Proof of Theorem~\ref{thm:constructions}(a)]

	    The $k=1$ case follows by considering a star, and the $t=3$ case for $k=2,3$ follows by considering a comatching of size $n$ and a comatching of size $n-1$ together with an isolated vertex, respectively. It remains to prove the case $k\ge 2$ and $t\ge 4$.  More generally, we will show for any fixed $k,t$ in this range that $m_t(n,k)\ge n^{\floor{\f{(t-2)k}{t-1}}-o(1)}$, which will in particular give the result when $k<2(t-1)$.
	    
	    For $r\ge 2$, we let $G_r$ denote the construction from Lemma~\ref{lem:conGowers}, and we let $G_1$ denote an isolated vertex.  Note that for all $r\ge 1$ the graph $G_r$ has $n^{r-1-o(1)}$ distinct $r$-MIS's. Write $k=(t-1)q+s$ with $q=\floor{k/(t-1)}$ and $0\le s<t-1$.  Let $G$ be the graph formed by taking $q$ disjoint copies of $G_{t-1}$ and one copy of $G_s$ if $s\ne 0$.  Observe that $G$ is $K_t$-free since it is the disjoint union of $(t-1)$-partite graphs and that it has $\Theta_{k,t}(n)$ vertices.  Observe that the number of $k$-MIS's in $G$ is at least \[(n^{t-2-o(1)})^q\cdot (n^{s-1-o(1)})=n^{(t-2)q+s-1-o(1)}.\]
		A simple computation shows that this gives the stated bounds, proving the result.
	\end{proof}
	
	As we will show, the construction above is best possible in terms of order of magnitude for $t=3$ and $k\le 4$, but for $k=5$ one can do better. We give a sketch of such a construction below.
	
	Let $v_1,\ldots,v_5$ be the five vertices of a 5-cycle $C_5$ whose edges are $e_i=\{v_i,v_{i+1}\}$ for $1\le i\le 5$. Let $V_{i}$ be the set of functions from $\{e_i,e_{i+1}\}$ to $[n^{1/2}]$, which can also be considered as the set of all possible $n^{1/2}$-colorings of $e_i$ and $e_{i+1}$. Let $G$ be the graph on $V_{1}\cup V_{2}\cup V_{3}\cup V_{4}\cup V_{5}$ where two vertices (i.e. functions) $f\in V_i$, $g\in V_j$ form an edge if there exists an edge $e\in C_5$ with $v_i,v_j\in e$ (i.e. if $j=i+1$ or $i=j+1$) and if $f(e)\not=g(e)$.  Observe that $G$ has $5n$ vertices and that it is a subgraph of a blowup of $C_5$, so in particular it is triangle-free.
	
	We claim that $G$ has many MIS's.  Indeed, given an $n^{1/2}$-coloring $F$ of the edges of $C_5$, let $f_{i,F}\in V_i$ be the function that agrees with $F$ on $e_i$ and $e_{i+1}$. It is not hard to check that $\{f_{1,F},f_{2,F},f_{3,F},f_{4,F},f_{5,F}\}$ is a 5-MIS. Since there are $n^{5/2}$ possible $n^{1/2}$-colorings of the edges of $C_5$, we know that $G$ admits at least $n^{5/2}$ MIS of size $5$. Because $G$ is triangle-free and has $5n$ vertices, we conclude that $m_3(n,5)\ge (n/5)^{5/2}$.

The graph $G$ we constructed above can be viewed as a certain blowup of a $5$-cycle.  We will generalize this blowup construction to hypergraphs by making use of fractional matchings.

\begin{defn}
Given a hypergraph $H=(V,E)$ and $x\in V(H)$,  we define \[E_x=\{e\in E:x\in e\}.\]  A {\em fractional matching} of $H$ is a function $M$ that maps $E(H)$ to non-negative real numbers such that for each vertex $x$, 
$$
\sum_{e\in E_x}M(e)\le1.
$$
We define the \textit{total weight} of a fractional matching $M$ to be 
$$|M|=\sum_{e\in E}M(e).$$
\end{defn}
With this we can define our main construction.  This construction utilizes an $r$-graph $H$ on $[k]$ together with graphs from Lemma~\ref{lem:conGowers} whose parts are indexed by $[r]$.  As much as possible we let $x,y$ denote elements of $[k]$ with $i,j$ denoting elements of $[r]$.
\begin{defn}\label{def:construction}
Given an $r$-graph $H=([k],E)$ and a fractional matching $M$ of $H$, we define the $(M,n)$-blowup graph of $H$, denoted by $G_n[H,M]$, as follows. For each $e\in E$, let $G_e$ be a graph as in Lemma~\ref{lem:conGowers} such that each of its parts $U_{e,1}$, $U_{e,2},\dots,U_{e,r}$ has $n^{M(e)}$ vertices. For each $x\in [k]$, define $V_x$ to be the set of functions $f$ from $E_x$ to $\bigcup_{e,i} U_{e,i}$ such that $f(e)\in U_{e,i}$ if and only if $x$ is the $i$th smallest vertex in the edge $e\subset [k]$. We make $f\in V_x$, $g\in V_y$ adjacent in $G_n[H,M]$ if and only if there is an edge $e\ni x,y$ in $H$ and $f(e)\sim g(e)$ in $G_e$. 
\end{defn}

For example, if $H=C_5$ and $M$ is identically equal to 1/2, then this recovers the example we detailed above for $C_5$, where in this example each graph $G_e$ was a comatching on two copies of $[n^{1/2}]$.

In general, observe that the number of vertices of $G_n[H,M]$ is $\sum_{x\in [k]} \prod_{e\in E_x}  n^{M(e)}\le kn$, where here we used that $M$ is a fractional matching.  Also observe that $G_n[H,M]$ contains edges between some $V_x,V_y$ only if $xy$ is an edge in the shadow graph $\pa H$ of $H$, which we recall is the graph on $V(H)$ which has the edge $xy\in \pa H$ if and only if there is a hyperedge of $H$ containing $x,y$.  In particular, if $\partial H$ is $K_t$-free, then so is $G_n[H,M]$.  As in the $C_5$ example, it turns out that $G_n[H,M]$ can contain many MIS's.

\begin{lem}\label{lem:blowupconstruction}
Given an $r$-graph $H$ on $[k]$ and a fractional matching $M$ of $H$, the graph $G_n[H,M]$ has at least $n^{|M|(r-1-o(1))}$ distinct $k$-MIS's.  Moreover, when $r=2$ it has at least $\Om(n^{|M|})$ distinct $k$-MIS's.
\end{lem}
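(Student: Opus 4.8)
The plan is to manufacture $k$-MIS's of $G_n[H,M]$ by independently choosing, for each edge $e\in E(H)$, a transversal $r$-MIS of the graph $G_e$, and then stitching these choices together. Concretely, suppose for every $e$ we have selected a transversal $r$-MIS $T_e=\{w_{e,1},\dots,w_{e,r}\}$ of $G_e$ with $w_{e,i}\in U_{e,i}$. For each $x\in[k]$ define $f_x\in V_x$ by $f_x(e)=w_{e,i}$ whenever $x$ is the $i$th smallest vertex of the edge $e\ni x$. I would then show that $I:=\{f_x:x\in[k]\}$ is always a transversal $k$-MIS, that distinct tuples $(T_e)_e$ produce distinct sets $I$, and finally count the tuples.

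The engine of the argument is a small fact about a single $G_e$: since $G_e$ is $r$-partite and $T_e$ is maximal, replacing any one $w_{e,i}$ by a different vertex $w'\in U_{e,i}$ yields a set that is \emph{not} independent. Indeed $T_e\cup\{w'\}$ fails to be independent by maximality of $T_e$, while $G_e$ has no edges inside $U_{e,i}$, so $w'$ must be adjacent to some $w_{e,j}$ with $j\neq i$, and that edge survives in the modified set. Granting this, independence of $I$ is immediate: $f_x$ and $f_y$ can be adjacent only via an edge $e\ni x,y$ with $f_x(e)\sim f_y(e)$, but those are two distinct (hence non-adjacent) members of $T_e$. For maximality, suppose some $h\in V_z$ with $h\neq f_z$ could be added to $I$. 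Since $h$ and $f_z$ lie in the same part, they are automatically non-adjacent, so the obstruction must be extracted from elsewhere: pick an edge $e_0\ni z$ on which $h$ and $f_z$ disagree, say $z$ is the $i_0$th smallest vertex of $e_0$, so $h(e_0)\in U_{e_0,i_0}$ differs from $w_{e_0,i_0}$. For each of the other $r-1$ vertices $x$ of $e_0$, non-adjacency of $h$ and $f_x$ (witnessed through $e_0$ itself) gives $h(e_0)\not\sim w_{e_0,i}$ in $G_{e_0}$, with $i$ ranging over $[r]\setminus\{i_0\}$. Thus $\{h(e_0)\}\cup\{w_{e_0,i}:i\neq i_0\}$ is an independent transversal set of $G_{e_0}$, contradicting the fact above.

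Injectivity of $(T_e)_e\mapsto I$ is routine: all the sets are transversal, so $I=I'$ forces $f_x=f'_x$ for every $x$, and reading off coordinates gives $w_{e,i}=w'_{e,i}$ for all $e,i$. For the count, note that \emph{any} transversal $r$-MIS of $G_e$ is a legal choice of $T_e$ (the argument used only that $T_e$ is transversal and maximal and that $G_e$ is $r$-partite), and Lemma~\ref{lem:conGowers} guarantees $G_e$ has at least $(n^{M(e)})^{r-1-o(1)}$ of them, and $\Omega(n^{M(e)})$ when $r=2$. Since $H$ has only $O_k(1)$ edges, multiplying over all $e\in E(H)$ produces $\prod_{e}n^{M(e)(r-1)-o(1)}=n^{(r-1)\sum_e M(e)-o(1)}=n^{|M|(r-1-o(1))}$ (and $\Omega(n^{|M|})$ when $r=2$), where we used $|M|=\sum_e M(e)$ together with the fact that $|M|\le k/2$ is a constant. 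One should of course read $n^{M(e)}$ as $\floor{n^{M(e)}}$ throughout, which only perturbs the $o(1)$.

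The only delicate step is the maximality argument, and even there the difficulty is bookkeeping rather than substance: one has to notice that a would-be extra vertex is harmless within its own part, so the contradiction must be dug out of an edge $e_0$ on which it genuinely differs from $f_z$, and one must keep the labeling ``the $i$th smallest vertex of $e_0$'' synchronized with the parts $U_{e_0,i}$ and with the adjacency rule of $G_n[H,M]$. I do not foresee any real obstacle beyond this.
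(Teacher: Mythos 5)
Your proposal is correct and follows essentially the same route as the paper's proof: choose a transversal $r$-MIS of each $G_e$ independently, stitch these into a transversal $k$-set $\{f_x\}_{x\in[k]}$, verify independence and maximality by pushing adjacency questions down to individual $G_e$'s, and multiply the counts from Lemma~\ref{lem:conGowers}. The only cosmetic difference is that you factor out as a standalone claim the observation that swapping one coordinate of a transversal $r$-MIS of an $r$-partite graph destroys independence, whereas the paper invokes the same fact inline.
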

As an example, when $G=C_{k}$ with $k$ even, there is a 1-parameter family of fractional matchings $M$ of weight $k$.  This lemma shows that each of these (very different) constructions gives $\Om(n^{k/2})$ distinct $k$-MIS's.
\begin{proof}
For each $e\in E$, let $\c{I}_e$ be the set of transversal $r$-MIS's of $G_e$. For any function $F$ which maps edges $e\in E(H)$ to sets in $\c{I}_e$, we define the set $I_F\subset V(H)$ to be the set of vertices $f$ of $G_n[H,M]$ (i.e. functions from some $E_x$ to $\bigcup_{e,i} U_{e,i}$) such that $f(e)\in F(e)$ for all $e$ in the domain of $f$. We claim that $I_F$ is a $k$-MIS of $G_n[H,M]$.

Observe that each $V_x$ has a unique element in $I_F$, namely the function $f_x$ which sends each $e\in E_x$ to the unique vertex in $F(e)\cap U_{e,i}$ whenever $x$ is the $i$th smallest vertex in $e$.  Also note that having $f_x\sim f_y$ in $H$ would imply that there is some hyperedge $e\ni x,y$ such that $f_x(e)\sim f_y(e)$ in $G_e$, a contradiction to $f_x(e),f_y(e)\in F(e)\in \c{I}_e$. Finally, assume there was some other $f\in V_x$ such that $\{f\}\cup I_F$ is still an independent set in $G_n[H,M]$. Because $f\not=f_x$, there is some $e\in E_x$ such that $f(e)\not=f_x(e)$. By construction $\{f_y(e):y\in e\}$ is an MIS of $G_e$, so this means $f(e)\sim f_y(e)$ in $G$ for some $y\not=x$, and hence $f\sim f_y$ in $G_n[H,M]$, a contradiction.

We conclude that each $I_F$ if an MIS of size $k$. The number of choices for $F$ is 
$$\prod_{e\in E(H)}|\c{I}_e|\ge\prod_{e\in E(H)}n^{M(e)(r-1-o(1))}=n^{|M|(r-1-o(1))}.$$
It is not hard to see that each $F$ gives a distinct independent set $I_F$, proving the result for $r>2$.  The case $r=2$ follows from the exact same reasoning except we now use the bound $|\c{I}_e|=\Om( n^{M(e)})$ guaranteed by Lemma~\ref{lem:conGowers}.
\end{proof}

In view of Lemma~\ref{lem:blowupconstruction} and the observation that $G_n[H,M]$ is $K_t$-free if $\partial H$ is $K_t$-free, we are left with the problem of finding $k$-vertex $r$-graphs with large fractional matchings and no $K_t$ in their shadow.  Tight cycles turn out to be one such example.

\begin{defn}
For integers $k\ge r\ge 2$, the $r$-uniform \textit{tight cycle} of length $k$, denoted by $TC^r_k$, is the $r$-graph with vertices $V=\{v_0, v_1, \dots, v_{k-1}\}$, whose edges are all $r$-sets in $V$ of the form $\{v_i,v_{i+1},\dots, v_{i+r-1}\}$, where all indices are modulo $k$.
\end{defn}
Observe that for $r=2$ this is simply the graph cycle $C_k$.

\begin{lem}\label{lem:cliquefree}
For any integer $r\ge2$ and  $k\ge 2r$, the shadow graph $\partial TC^r_k$ is $K_{r+1}$-free.
\end{lem}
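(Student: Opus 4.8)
The plan is to first rewrite $\pa TC^r_k$ in a cleaner form. Two vertices $v_x,v_y$ are adjacent exactly when $x$ and $y$ both lie in some window $\{i,i+1,\dots,i+r-1\}$ of $r$ consecutive indices, which happens if and only if their cyclic distance in $\Z/k\Z$ is at most $r-1$. Thus $\pa TC^r_k$ is the $(r-1)$st power of the cycle $C_k$, and the goal becomes showing this graph contains no $K_{r+1}$ when $k\ge 2r$. So I would suppose for contradiction that $v_{b_0},\dots,v_{b_r}$ form a clique, list them in cyclic order around $\Z/k\Z$, and let $g_1,\dots,g_{r+1}\ge1$ be the consecutive gaps, so $g_1+\cdots+g_{r+1}=k$. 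The first step is to observe that every gap satisfies $g_j\le r-1$: two cyclically consecutive clique vertices have cyclic distance $\le r-1$, and if that distance were realized the ``long way'' (i.e.\ $k-g_j\le r-1$) then the remaining $r$ gaps would sum to $k-g_j\le r-1$ while each is at least $1$, which is impossible.

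Next I would introduce the partial sums $s_\ell=g_1+\cdots+g_\ell$ (so $s_0=0$, $s_{r+1}=k$). For any $0\le i<m\le r$ the clique condition on $v_{b_i},v_{b_m}$ says their cyclic distance $\min(s_m-s_i,\,k-(s_m-s_i))$ is $\le r-1$, i.e.\ $s_m-s_i$ avoids the ``middle band'' $\{r,r+1,\dots,k-r\}$, which is nonempty since $k\ge 2r$. Taking $i=0$, each $s_\ell$ with $1\le\ell\le r$ is either \emph{near} ($s_\ell\le r-1$) or \emph{far} ($s_\ell\ge k-r+1$); since the $s_\ell$ strictly increase, the near indices form an initial block $\{0,1,\dots,a\}$ and the far indices the block $\{a+1,\dots,r\}$. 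The near values are $a+1$ distinct integers in $\{0,\dots,r-1\}$ and the far values are $r-a$ distinct integers in $\{k-r+1,\dots,k-1\}$ (a set of size $r-1$), which forces $1\le a\le r-1$. Reflecting the far partial sums via $t_m:=k-s_m\in\{1,\dots,r-1\}$, the band condition for a near index $i\ge1$ and a far index $m$ becomes $s_i+t_m\le r-1$ or $s_i+t_m\ge k-r+1$. Since the $s_\ell$ increase from $0$ we have $s_a\ge a$, and likewise $t_{a+1}\ge r-a$ since the $t_m$ are $r-a$ distinct positive integers; hence $s_a+t_{a+1}\ge r$, which rules out the first option and forces $s_a+t_{a+1}\ge k-r+1$, and therefore (both summands being at most $r-1$) $k\le 3r-3$.

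This already handles all $k\ge 3r-2$, and the leftover range $2r\le k\le 3r-3$ (nonempty only for $r\ge 3$) is where I expect the genuine difficulty to lie. The idea is to keep pitting extreme near values against successive far values (and symmetrically): whenever the band condition is forced into its near branch $s_i+t_m\le r-1$ it pins down an equality such as $s_i=i$, and whenever it is forced into its far branch it lowers the bound on $k$; since there are only $r+1$ vertices these alternatives cannot all hold, and one should be able to push the bound down to $k\le 2r-1$, contradicting $k\ge 2r$. (As a sanity check: when $a=1$ the far partial sums must be exactly $k-r+1,\dots,k-1$, so $t_m=r-m+1$, and running $m$ downward from $r$ through the conditions ``$s_1\le m-2$ or $s_1\ge k-2r+m$'' forces $s_1\le 0$, impossible.) An alternative route worth trying is to argue purely with the gaps — every block of consecutive gaps of total length at most $r-1$ lies in a common window, at most one consecutive pair $g_j+g_{j+1}$ can be $\ge k-r+1$, and one hopes to squeeze $\sum g_j$ below $2r$ — or to exploit the tournament on $\{b_0,\dots,b_r\}$ in which $b_i\to b_j$ when the clockwise arc from $b_i$ to $b_j$ has length at most $r-1$, whose out-degrees sum to $\binom{r+1}{2}$ while each is at most $r-1$. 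Closing this small-$k$ gap cleanly is the main obstacle.
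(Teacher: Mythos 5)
Your reformulation of $\partial TC^r_k$ as the $(r-1)$st power of $C_k$ (adjacency iff cyclic distance at most $r-1$) is correct, as is the observation that every gap $g_j$ between cyclically consecutive clique vertices is at most $r-1$ and the bound $k\le 3r-3$ that falls out of the near/far partial-sum dichotomy. But the proof is genuinely incomplete: you explicitly leave open the range $2r\le k\le 3r-3$ (nonempty as soon as $r\ge 5$, and already $k=2r$ for $r\ge 3$ is not covered by the $a=1$ sanity check alone), and the sketched strategies for closing it --- iterating the band condition to ``pin down equalities,'' a purely gap-based squeeze, or the tournament out-degree count --- are not carried out and it is not clear any of them terminates cleanly. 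As it stands the lemma is proved only for $k\ge 3r-2$.

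The paper's argument avoids all of this with a single pigeonhole. Fix a clique vertex, say $v_0$. Its neighborhood in $\partial TC^r_k$ is $\{v_1,\dots,v_{r-1}\}\cup\{v_{k-r+1},\dots,v_{k-1}\}$, which has exactly $2(r-1)$ vertices since $k\ge 2r$, and it can be partitioned into the $r-1$ pairs $\{v_i,v_{i-r}\}$ for $1\le i\le r-1$ (indices mod $k$). Each such pair is at cyclic distance exactly $r$ (again using $k\ge 2r$, so $\min(r,k-r)=r$), hence is a non-edge of $\partial TC^r_k$. The remaining $r$ clique vertices all lie in this neighborhood of $v_0$, so by pigeonhole some pair $\{v_i,v_{i-r}\}$ is fully contained in the clique --- a contradiction. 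This is shorter, handles all $k\ge 2r$ uniformly, and makes transparent exactly where the hypothesis $k\ge 2r$ is used (to guarantee the pairs are disjoint and each pair is a non-edge). If you want to salvage your approach, notice that your ``far'' reflection $t_m=k-s_m$ is doing implicitly what the paper's antipodal pairing does explicitly; recasting your argument so that you look directly for a pair $\{s_i, t_m\}$ with $s_i=t_m$ and then pigeonhole on the $r$ values $s_1,\dots,s_a,t_{a+1},\dots,t_r$ in $\{1,\dots,r-1\}$ gets you back to essentially the paper's proof, but relative to a chosen base vertex rather than via partial sums.
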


\begin{proof}

Suppose for contradiction that $A$ is a set of vertices that induced a copy of $K_{r+1}$. Without lost of generality, we can assume $v_0\in A$. Then $A\backslash\{v_0\}$ must be contained in the set of neighbors of $v_0$. Partition the set of neighbors of $v_0$ into $r-1$ pairs $\{v_{i}, v_{i-r}\}$, $1\le i\le r-1$. Since $|A\backslash\{v_0\}|=r$, by the Pigeonhole Principle, there must be a pair $\{v_{i}, v_{i-r}\}\subset A$. Since $k\ge 2r$, it is not hard to check that $\{v_i, v_{i-r}\}=\{v_i,v_{i+(k-r)}\}$ is not contained in any edge of $TC_k^r$, and hence this is not an edge of $\partial TC^r_k$.  This gives a contradiction, completing the proof.
\end{proof}

\begin{proof}[Proof of Theorem~\ref{thm:constructions}(b)]
Let $H=TC^{t-1}_k$ with $k\ge 2(t-1)$, and let $M$ be the fractional matching with $M(e)=1/(t-1)$ for all $e$. By definition, $|M|=k/(t-1)$. Now consider the $(M,n)$-blowup graph $G_n[H,M]$. By Lemma~\ref{lem:blowupconstruction}, $G_n[H,M]$ has at least $n^{\frac{k(t-2-o(1))}{t-1}}$ distinct $k$-MIS's. Also, by Lemma~\ref{lem:cliquefree} we know that $\partial H$ is $K_t$-free, and hence $G_n[H,M]$ is $K_t$-free. Note that the number of vertices in $G_n[H,M]$ is $kn$, giving $m_t(n,k)\ge (n/k)^{\frac{k(t-2-o(1))}{t-1}}$ and hence proving the result for $k$ fixed.
\end{proof}
We note that any fractional matching $M$ of a $k$-vertex $r$-graph satisfies $|M|\le k/r$, and also that essentially any $(M,n)$-blowup of an $r$-graph with $r\ge t$ will contain copies of $K_t$.  Thus the lower bound of Theorem~\ref{thm:constructions}(b) is best possible using constructions of this form.

Lastly, we note that it is possible to generalize our construction to non-uniform hypergraphs $H$.  Indeed, the construction used in Theorem~\ref{thm:constructions}(a) can be viewed as a blowup of the non-uniform hypergraph consisting of $q$ disjoint edges of size $t-1$ and an edge of size $s$.  However, for $k\ge 2(t-1)$ the best constructions of this form will always come from a $(t-1)$-uniform hypergraph.

\section{The case $k<t$}\label{sec:small}
We start with a simple observation.
\begin{lem}\label{lem:t1}
    For all $t\ge 3$, we have
		\[m_t(n,1)=\begin{cases}
			n & n<t,\\ 
			t-2 & n\ge t.\\
		\end{cases}\]
\end{lem}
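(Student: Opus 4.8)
The plan is to analyze $1$-MIS's directly. A $1$-MIS is a single vertex $v$ that is also a dominating set, i.e.\ $v$ is adjacent to every other vertex of $G$. So $m_t(n,1)$ is simply the number of universal (full-degree) vertices in $G$, maximized over $K_t$-free graphs $G$ on $n$ vertices. The two cases should then drop out of this reformulation.

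First I would handle the case $n < t$: here the complete graph $K_n$ is itself $K_t$-free (it has no $K_t$ since $n<t$), and every one of its $n$ vertices is universal, so $m_t(n,1)\ge n$; trivially it cannot exceed $n$, giving equality. Second, for $n\ge t$, I would argue the upper bound $m_t(n,1)\le t-2$: if $U$ is the set of universal vertices of a $K_t$-free graph $G$, then $U$ induces a clique (any two universal vertices are adjacent), so $|U|\le t-1$; and if $|U|=t-1$, pick any vertex $w\notin U$ (possible since $n\ge t$), and then $U\cup\{w\}$ is a clique of size $t$ because each vertex of $U$ is adjacent to $w$ — contradiction. Hence $|U|\le t-2$. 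For the matching lower bound when $n\ge t$, take $K_{t-2}$ joined to an independent set of size $n-t+2$ (i.e.\ a complete split graph); this is $K_t$-free since its largest clique has size $t-1$, and the $t-2$ vertices of the $K_{t-2}$ part are all universal, so $m_t(n,1)\ge t-2$.

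There is essentially no hard step here; the only thing to be slightly careful about is the boundary behavior and making sure the construction for $n\ge t$ is genuinely $K_t$-free (its clique number is exactly $t-1$, one universal vertex together with an edge of the independent part cannot happen since the independent part has no edges, so the maximum clique is the $(t-2)$-clique plus one independent vertex, of size $t-1$). I would also remark that $n\ge t$ forces $n-t+2\ge 2$ so the independent part is nonempty, which is what makes the split-graph construction valid and also what powers the upper bound argument. The case $n = t$ versus $n > t$ needs no separate treatment once phrased via ``pick $w \notin U$.''
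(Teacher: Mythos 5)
Your proposal is correct and matches the paper's proof essentially step for step: characterize a $1$-MIS as a universal vertex, use $K_n$ for $n<t$, argue that $t-1$ universal vertices together with an outside vertex (guaranteed by $n\ge t$) would yield a $K_t$, and exhibit the split graph $K_{t-2}$ joined to an independent set for the lower bound. You've just spelled out the clique-counting step a bit more explicitly than the paper does.
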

\begin{proof}
For $n<t$, the graph $G=K_n$ shows $m_t(n,1)\ge n$ which is best possible.  For $n\ge t$, we observe that any vertex $v\in V(G)$ which is an MIS must be adjacent to every other vertex of $G$.  If $|V(G)|\ge t$, then having $t-1$ such vertices would form a $K_t$.  Thus $m_t(n,1)\le t-2$ in this case, and taking $G$ to be a graph consisting of a $K_{t-2}$ which is adjacent to every other vertex shows that this is tight.
\end{proof}
With this we can prove Proposition~\ref{prop:smallK}.

\begin{proof}[Proof of Proposition~\ref{prop:smallK}]
We will prove this by showing
    \[m_t(n,k)\le t {n\choose k-1}.\]
Let $G$ be an $n$-vertex graph.  Observe that any $k$-MIS of $G$ can be formed by first choosing $k-1$ vertices $\{v_1,\ldots,v_{k-1}\}$ followed by a vertex $v_k$ which is a 1-MIS of the graph induced by $V(G)\sm \bigcup_i N(v_i)$.  There are at most ${n\choose k-1}$ ways to do this first step, and by Lemma~\ref{lem:t1} there are at most $t$ ways to do the second step, giving the stated bound.
\end{proof}

By using a more careful argument, we can obtain an exact formula for the maximum number of 2-MIS's in triangle-free graphs, which proves the first part of Theorem~\ref{thm:triangle-free}.
	\begin{prop}\label{prop:bipartite-case}
		For $n\ge 8$, we have \[m_3(n,2)=\floor{n/2}.\]  Moreover, the unique construction achieving this bound is the comatching of size $n$.
	\end{prop}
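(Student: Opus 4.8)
The plan is to prove the matching upper bound $m_3(n,2)\le\floor{n/2}$ and to characterize equality, for $n\ge 8$. Let $G$ be a triangle-free graph on $n$ vertices, and let $\c{I}$ be the family of $2$-MIS's of $G$; each $I\in\c{I}$ is a non-edge $\{u,v\}$ with the property that every other vertex of $G$ is adjacent to $u$ or to $v$. First I would record the basic dominating-set structure: if $\{u,v\}$ is a $2$-MIS then $N(u)\cup N(v)=V(G)\sm\{u,v\}$, and since $G$ is triangle-free, $N(u)$ and $N(v)$ are each independent sets and $N(u)\cap N(v)$ is also independent (being contained in $N(u)$). The key counting idea is to bound, for each vertex $x$, the number of $2$-MIS's that contain $x$. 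Call this $d^*(x)$. If I can show $\sum_{x} d^*(x)\le n$ with appropriate slack, then $|\c{I}|=\half\sum_x d^*(x)\le n/2$, and since $|\c{I}|$ is an integer this gives $|\c{I}|\le\floor{n/2}$.

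The heart of the argument is therefore: \emph{for $n$ large enough, no vertex lies in two different $2$-MIS's}, i.e.\ $d^*(x)\le 1$ for all $x$ — this is exactly what makes a comatching (a near-perfect matching of non-edges) the extremal object. Suppose $\{x,u\}$ and $\{x,v\}$ are both $2$-MIS's with $u\ne v$. Then $u\not\sim v$ is impossible in general, so I would split into cases on whether $u\sim v$. If $u\sim v$: every vertex $w\notin\{x,u,v\}$ is dominated by $\{x,u\}$ and by $\{x,v\}$; a vertex $w\not\sim x$ must then be adjacent to both $u$ and $v$, contradicting triangle-freeness (as $u\sim v$), so $x$ is adjacent to every vertex outside $\{x,u,v\}$, forcing $N(u),N(v)\subseteq\{x\}$, so $u,v$ have degree $1$; but then e.g.\ $\{u,v\}$... — more carefully, $N(x)\supseteq V\sm\{x,u,v\}$ has size $n-3\ge 3$ and is independent, while $u,v\notin N(x)$; now any $2$-MIS through $u$ is $\{x,u\}$ and likewise for $v$, and I'd derive a contradiction with $n\ge 8$ by exhibiting a third vertex that can be added, or by a direct degree count. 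If $u\not\sim v$: then $\{u,v\}$ is independent and in fact I claim $\{u,v\}$ fails to be maximal only through $x$, and one shows the three sets $N(x),N(u),N(v)$ cover $V$ with heavy overlap, and triangle-freeness plus $n\ge 8$ again yields a contradiction. I expect the bookkeeping here — enumerating how $N(x),N(u),N(v)$ intersect and using independence of each pairwise intersection — to be the main obstacle, and the place where the hypothesis $n\ge 8$ is actually used (small cases like $C_5$ show the bound can genuinely fail for tiny $n$).

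Once $d^*(x)\le 1$ for all $x$ is established, the upper bound $|\c{I}|\le\floor{n/2}$ is immediate. For the uniqueness statement: the lower bound construction (Lemma-style, already noted in the text: a comatching of order $n$ has $\floor{n/2}$ transversal $2$-MIS's) shows the bound is attained. Conversely, if $G$ attains $\floor{n/2}$, then essentially all vertices lie in exactly one $2$-MIS, so the $2$-MIS's form a near-perfect matching $\{u_1,v_1\},\dots,\{u_m,v_m\}$ on the non-edges of $G$ with $m=\floor{n/2}$. I would then argue that maximality of each $\{u_i,v_i\}$ together with triangle-freeness forces, for $i\ne j$, the pair $u_i,v_i$ to dominate $u_j$ and $v_j$ in the only consistent way, namely that $G$ restricted to $\{u_i,v_i,u_j,v_j\}$ is a $4$-cycle; propagating this across all pairs shows the non-neighborhood structure is exactly that of $K_{\floor{n/2},\ceil{n/2}}$ minus a perfect-ish matching, i.e.\ $G$ is a comatching of order $n$. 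The one leftover vertex when $n$ is odd is handled by checking it must be adjacent to one endpoint of every matched non-edge, consistent with the comatching definition. The delicate point in uniqueness, as before, is ruling out sporadic local configurations, which is where $n\ge 8$ re-enters.
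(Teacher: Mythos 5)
Your central claim --- that for $n\ge 8$ no vertex lies in two different 2-MIS's --- is false, and this sinks the plan. Take $G$ to be the disjoint union of a star $K_{1,n-3}$ with center $x$ and a single edge $uv$; this is triangle-free on $n$ vertices, and both $\{x,u\}$ and $\{x,v\}$ are 2-MIS's (each leaf of the star is adjacent to $x$, and the leftover vertex among $\{u,v\}$ is adjacent to the other one). So $d^*(x)=2$ for arbitrarily large $n$. The graph is of course far from extremal (it has only two 2-MIS's), but it kills the lemma you are trying to prove. The specific deduction that breaks is the one you flag yourself: from ``$x$ is adjacent to every vertex outside $\{x,u,v\}$'' you cannot conclude $N(u),N(v)\subseteq\{x\}$ --- in fact $x\notin N(u)$ since $\{x,u\}$ is independent, and $u,v$ may have arbitrarily many neighbors inside $N(x)$ so long as $N(u)\cap N(v)\cap N(x)=\emptyset$ (triangle-freeness imposes no further constraint). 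There is no contradiction to extract here. (Also, the case $u\not\sim v$ is not merely ``impossible in general'' --- it is outright impossible: $\{x,u\}$ must dominate $v$ and $v\not\sim x$.)

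The paper avoids this by bounding all 2-MIS's relative to one fixed 2-MIS $\{x,y\}$, rather than bounding $d^*$ vertex-by-vertex. It partitions $V\sm\{x,y\}$ into $V_x$ (adjacent only to $x$), $V_y$ (adjacent only to $y$), and $V_{x,y}$ (adjacent to both), observes that every $v\in V_{x,y}$ has $N(v)=\{x,y\}$ by triangle-freeness, and then shows (i) the 2-MIS's meeting $\{x,y\}$ number at most $r+1$ where $r=|\{S\in\{V_x,V_y\}:|S|=1\}|$, and (ii) every 2-MIS avoiding $\{x,y\}$ contains all of $V_{x,y}$, hence is constrained by a short case analysis on $|V_{x,y}|$. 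The bound and uniqueness then fall out because equality forces $r=0$, $V_{x,y}=\emptyset$, and the non-edges across $V_x,V_y$ to form a perfect matching. If you want to rescue a ``$d^*$-style'' argument, you would need the weaker structural fact that $d^*(x)=2$ forces $N(x)=V\sm\{x,u,v\}$, and then to argue this caps $|\c{I}|$ globally at a small constant --- but at that point you are essentially fixing a 2-MIS and reasoning about the rest of the graph relative to it, which is the paper's approach.
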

	\begin{proof}
		The lower bound follows by considering a comatching.  To complete the proof, let $G$ be an $n$-vertex triangle-free graph and let $\c{I}$ denote the set of 2-MIS's of $G$.  For the sake of contradiction, assume $|\c{I}|\ge \floor{n/2}$ and that $G$ is not isomorphic to a comatching.

		Let $\{x,y\}\in \c{I}$, and let $V_x,V_y,V_{x,y}$ be the set of vertices which are adjacent to $x$ but not $y$, adjacent to $y$ but not $x$, and adjacent to both $x$ and $y$, respectively.  Observe that $\{x,y\}$ being an MIS implies that these three sets partition $V(G)\sm \{x,y\}$.
		\begin{claim}\label{cl:I1}
			Let $\c{I}_1$ denote the set of 2-MIS's of $G$ which contain at least one of $x$ or $y$, and let $r=|\{|S|=1: S\in \{V_x,V_y\}\}|.$   Then\[|\c{I}_1|\le r+1.\]
		\end{claim}
		\begin{proof}
			Observe that any independent set $I$ containing $x$ must lie in $\{x,y\}\cup V_y$. Because $G$ is triangle-free, $V_y$ is an independent set, so the only MIS's containing $x$ are $\{x,y\}$ and $\{x\}\cup V_y$.  Similarly the only MIS's containing $y$ are $\{x,y\}$ and $\{y\}\cup V_x$.  From this the claim follows.
		\end{proof}
		\begin{claim}\label{cl:Vxy}
		    If $v\in V_{x,y}$, then $N(v)=\{x,y\}$.
		\end{claim}
		\begin{proof}
		    Any $u\in V(G)\sm \{x,y,v\}$ has a common neighbor with $v$, namely $x$ or $y$, so $G$ being triangle-free implies $u\notin N(v)$.
		\end{proof}
		\begin{claim}\label{cl:I2}
		    Let $\c{I}_2=\c{I}\sm \c{I}_1$, denote the set of 2-MIS's which do not contain either $x$ or $y$.  Every $I\in \c{I}_2$ contains $V_{x,y}$.
		\end{claim}
		\begin{proof}
		    By Claim~\ref{cl:Vxy}, if $I\sub V(G)\sm \{x,y\}$ is an independent set, then $I\cup V_{x,y}$ is also an independent set, so every MIS in $V(G)\sm \{x,y\}$ must contain $V_{x,y}$.
		\end{proof}
		We now break up the proof into several cases.  In our analysis we let $r$ be as in Claim~\ref{cl:I1}, and we will often make use of the following observation:
		\begin{equation}
		    |V_x|+|V_y|+|V_{x,y}|=n-2.\label{eq:n-2}
		\end{equation}

		\textit{Case $|V_{x,y}|\ge 2$.}  By Claim~\ref{cl:I2}, the only way $\c{I}_2\ne \emptyset$ is if $V_{x,y}$ is a 2-MIS.  In this case Claim~\ref{cl:Vxy} implies that $G=C_4$, contradicting the assumption $n\ge 8$.  Thus $\c{I}_2=\emptyset$, and by Claim~\ref{cl:I1} we have $|\c{I}|\le 3<\floor{n/2}$, a contradiction.

		\textit{Case $|V_{x,y}|=1$.}  Say $V_{x,y}=\{u\}$.  By Claim~\ref{cl:I2}, any 2-MIS of $\c{I}_2$ must use $u$ together with a vertex $v\in V_x\cup V_y$ which is adjacent to every other vertex of $V_x\cup V_y$.  The number of such $v$ is at most $r$, so by Claim~\ref{cl:I1} we find $|\c{I}|\le 1+2r$.  By \eqref{eq:n-2}, $r=2$ can only occur if $n=5$, so for $n\ge 8$ we again have $|\c{I}|\le 3<\floor{n/2}$, a contradiction.
		
		\textit{Case $V_{x,y}=\emptyset$.}  We claim that any $I\in \c{I}_2$ must use one vertex from $V_x$ and one vertex from $V_y$.  Indeed if we had $I\sub V_x$, then $I\cup \{y\}$ would be an independent set, contradicting the maximality of $I$.  
		
		With this claim we can write $\c{I}_2=\{\{u_1,v_1\},\ldots,\{u_m,v_m\}\}$ with $u_i\in V_x$ and $v_i\in V_y$.  Observe that $\{u_i,v_i\}\in \c{I}_2$ implies that, for example, $u_i$ is adjacent to every vertex of $\{x\}\cup V_y\sm \{v_i\}$, as otherwise $\{u_i,v_i\}$ would not be maximal.  In particular, $u_i\ne u_j$ and $v_i\ne v_j$ for all $i\ne j$, and hence $|\c{I}_2|\le \min\{|V_x|,|V_y|\}$.  Without loss of generality we can assume $|V_x|\le |V_y|$, so by Claim~\ref{cl:I1} we have
		
		\[|\c{I}|\le r+1+|V_x|.\]
		If $r=2$ then $n=4$ by \eqref{eq:n-2}, a contradiction.  Similarly $r=1$ implies $|V_x|=1$ and hence $|\c{I}|\le 3<\floor{n/2}$, a contradiction.  Thus we must have $r=0$, which by \eqref{eq:n-2} implies
		\[|\c{I}|\le 1+ |V_x|\le \floor{n/2}.\]
		The second equality can only hold if $|V_x|=\floor{n/2}-1$ and $|V_y|=\ceil{n/2}-1$.  The first equality can only hold if the only edges between $V_x$ and $V_y$ which are not present are $u_iv_i$ for all $i$.  In total this implies that $G$ is a comatching of size $n$, giving the desired contradiction.
	\end{proof}
    We note that a similar analysis can be used to determine $m_3(n,2)$ for all $n$, and in particular one can show 
    	\[m_3(n,2)=\begin{cases}
		2 & n=3,\\
		4 & n=4,\\ 
		5 & n=5,\\ 
		\floor{n/2} & n\ne 3,4,5.
	\end{cases}
	\]
	The comatching gives the lower bound for all $n\ne 5$, and the bound for $n=5$ is achieved by a $C_5$.  The comatching and $C_5$ are the unique constructions achieving these bounds, except when $n=6,7$, in which case there exist multiple constructions.  For example, if $G$ consists of a $C_4$ on $v_1v_2v_3v_4$ together with leaves $u_1,u_2$ adjacent to $v_1,v_3$, then $G$ is another 6-vertex triangle-free graph with 3 MIS's of size 2.

	\section{The cases $t=3$ and $k=3,4$}\label{sec:t3}
	In this section we study triangle-free graphs. The goal is to prove the second and third parts of Theorem~\ref{thm:triangle-free}, which state that $m_3(n,3) = \Theta(n)$ and $m_3(n,4) = \Theta(n^2)$, respectively. 
	
	We begin with a technical lemma, reducing the problem to counting only transversal MIS's in $k$-partite graphs. Given a $k$-partite graph $G$, We let $\Mt(G,k)$ denote the number of transversal $k$-MIS's of $G$. We note that the value of $\Mt(G,k)$ may depend on the implicit choice of the $k$-partition of $G$.
	
	\begin{lem}\label{lem:transversal}
	If $G$ is a triangle-free graph on $n$ vertices, then $G$ has an induced $k$-partite subgraph $G'\sub G$ satisfying
	\[
	\Mt(G',k) \ge (4k)^{-k} m(G,k).
	\]
	\end{lem}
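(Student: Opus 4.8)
The plan is to obtain the induced $k$-partite subgraph $G'$ by a random partitioning argument. Let $\c{I}$ be the set of all $k$-MIS's of $G$, so $|\c{I}| = m(G,k)$. I would assign each vertex $v \in V(G)$ a uniformly random label $c(v) \in [k]$, independently across vertices, and let $G'$ be the subgraph induced on $V(G)$ together with the ordered $k$-partition $V_1, \dots, V_k$ where $V_i = \{v : c(v) = i\}$ --- except that I only keep an edge of $G$ if its endpoints lie in different parts (equivalently, I work with the induced $k$-partite subgraph obtained by deleting edges inside parts; since $G$ is triangle-free and $k$-partite subgraphs are automatically triangle-free, staying inside the class is not an issue). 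The key point is to lower bound the expected number of transversal $k$-MIS's of $G'$ and then invoke the probabilistic method to extract a good fixed coloring.

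The main step is to show that a fixed $I = \{w_1, \dots, w_k\} \in \c{I}$ becomes a transversal $k$-MIS of $G'$ with probability at least $(4k)^{-k}$. For $I$ to be transversal we need $c$ to be injective on $I$, which happens with probability $k!/k^k \ge e^{-k}$ (better than $(4k)^{-k}$, so this factor alone is not the bottleneck). Conditioning on the event that $c|_I$ is a bijection onto $[k]$, I need $I$ to remain \emph{maximal} as a transversal independent set of $G'$: it is automatically independent, so I must rule out the existence of some $u \notin I$ with $c(u) = c(w_j)$ for some $j$ such that $\{u\} \cup I \setminus \{w_j\}$ is independent in $G'$, i.e.\ $u$ is nonadjacent (in $G$) to every $w_i$ with $i \ne j$. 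Here is where maximality of $I$ in $G$ is used: since $I$ is a maximal independent set of $G$, every vertex $u \notin I$ has at least one neighbor in $I$. For $u$ to threaten maximality of $I$ in $G'$ at coordinate $j$, the vertex $u$ must have \emph{exactly one} neighbor in $I$, namely $w_j$ (if it had a neighbor $w_i$, $i\ne j$, and $c(u)=c(w_j)$, then $\{u\}\cup I\setminus\{w_j\}$ still contains the edge $uw_i$). So the only dangerous vertices are those $u$ with a unique neighbor in $I$; writing $w_j(u)$ for that neighbor, $u$ is dangerous only if $c(u) = c(w_j(u))$.

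To control these dangerous vertices I would partition $V(G) \setminus I$ into the classes $D_j = \{u : u \text{ has unique neighbor } w_j \text{ in } I\}$ for $j \in [k]$ (plus a class of vertices with $\ge 2$ neighbors in $I$, which are harmless). A class $D_j$ causes a problem only if \emph{some} $u \in D_j$ receives color $j$; but I get to choose the bijection $c|_I$, and in particular I can choose which color $w_j$ gets. The cleanest way: instead of conditioning on a random bijection, condition on the (positive-probability) event that $c(w_i) = i$ for all $i$, and then separately bound, for each $j$, the probability that $D_j$ contains a vertex colored $j$. Actually this still costs a union bound over possibly-large $D_j$. The right move is a two-stage exposure: first reveal $c$ on $V(G) \setminus I$; this determines, for each $j$, whether color $j$ is ``blocked'' in the sense that some $u \in D_j$ got color $j$. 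Since the events ``$j$ is blocked'' for distinct $j$ depend on disjoint vertex sets $D_j$, they are independent, and each $D_j$ is blocked with probability at most $1 - (1 - 1/k)^{|D_j|}$, which could be close to $1$. So I cannot insist on the identity coloring; instead, after revealing $c$ on $V(G)\setminus I$, I choose the bijection $c|_I \colon I \to [k]$ uniformly at random among the $k!$ bijections. The number of bijections under which $I$ fails to be a maximal transversal set is at most the number of bijections that send some $w_j$ to a blocked color; crudely this is at most (number of blocked colors) $\cdot (k-1)!$. I expect the number of blocked colors to concentrate, but for a clean bound I would just say: the expected number of blocked colors is $\sum_j \Pr[D_j \text{ blocked}]$, and I want to argue this is, on average over a further random choice, at most $k/2$, so that at least $k!/2$ bijections work. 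Hmm — the honest bottleneck is precisely this tension, and I suspect the actual proof instead bounds things more bluntly, accepting a weak constant.

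Let me restate the cleanest version I would actually write. Reveal $c$ on all of $V(G)$ at once. Say $I$ \emph{survives} if $c|_I$ is a bijection and, for every $j$, no vertex of $D_j$ has color $c(w_j)$. Then surviving $I$ is exactly a transversal $k$-MIS of $G'$. Now $\Pr[I \text{ survives}] = \sum_{\pi \in S_k} \frac{1}{k^k} \prod_{j} \Pr[\text{no } u \in D_j \text{ has color } \pi(j)]$ — wait, the colorings of $I$ and of the $D_j$'s are not independent when... no, $I$ and $\bigsqcup D_j$ are disjoint, so they are independent. Fixing the bijection $\pi$ for $c|_I$, $\Pr[I\text{ survives}\mid c|_I = \pi] = \prod_{j=1}^k (1 - 1/k)^{|D_j|}\cdot$ — no: $\Pr[\text{no } u\in D_j \text{ gets color } \pi(j)] = (1-1/k)^{|D_j|}$, and these are over disjoint sets so they multiply, giving $\prod_j (1-1/k)^{|D_j|} = (1-1/k)^{\sum_j |D_j|} = (1-1/k)^{|V(G)\setminus I| - (\text{harmless})} \ge (1-1/k)^{n}$. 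That's exponentially small in $n$, which is far too weak. So survival probability per $I$ being $(4k)^{-k}$ is genuinely false with this naive scheme, and the real proof must be cleverer — presumably it does not ask \emph{all} of $I$ to survive simultaneously, but rather builds $G'$ so that many $I$'s survive in expectation via a charging/weighting argument, or restricts to a cleverly chosen subfamily $\c{I}$ first.

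Given the constraints of a proof sketch, here is my honest plan: I would use the random $k$-coloring, let $\c{I}'$ be the (random) set of $I \in \c{I}$ that are transversal (i.e.\ $c|_I$ injective) and maximal in $G'$, show $\E|\c{I}'| \ge (4k)^{-k} |\c{I}|$ by the per-set argument above but handled correctly, and conclude. The correct handling of the per-set bound — getting $(4k)^{-k}$ rather than something exponentially small — is the crux and the main obstacle; I expect it requires the observation that a vertex $u \in D_j$ only blocks the single specific assignment $c(w_j) = c(u)$, combined with choosing $c|_I$ \emph{after} seeing $c$ elsewhere and arguing that for at least a $(2k)^{-k}$ fraction (say) of the $k^k$ color-assignments to $I$, none of the $k$ blocking events occurs — which in turn should follow because for each $j$ the set of ``bad colors for $w_j$'' has size at most $k$ but we only need to dodge, for each $j$, the colors actually used by $D_j$, and a careful accounting (or a greedy choice of colors for $w_1, \dots, w_k$ in turn, each time avoiding at most... ) yields the bound. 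I will structure the final proof as: (1) set up the random coloring; (2) the survival lemma for a single $I$ with the $(4k)^{-k}$ bound, proved via greedy color choice on $I$; (3) linearity of expectation and extraction of $G'$. Step (2) is where all the difficulty lies.
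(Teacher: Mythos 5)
Your diagnosis of where the argument breaks is correct, and the gap is real. In your scheme a fixed $I\in\c{I}$ survives only if every ``dangerous'' vertex $u$ (one whose unique neighbor in $I$ is some $w_j$) avoids the color assigned to $w_j$; since the number of such vertices can be $\Theta(n)$, the survival probability is roughly $(1-1/k)^{\Theta(n)}$, exponentially small in $n$ rather than $(4k)^{-k}$. The greedy fix you float — revealing colors on $V(G)\setminus I$ first and then choosing $c|_I$ — does not repair this, because the dangerous set $D_j$ can be large enough that all $k$ colors already appear in it, leaving no safe color for $w_j$. There is also a mismatch with the statement: deleting intra-part edges makes your $G'$ a spanning non-induced subgraph, whereas the lemma asks for an induced subgraph of $G$.

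The paper sidesteps the maximality-breaking issue entirely by exploiting triangle-freeness \emph{before} any randomness enters. Fix any single $k$-MIS $I=\{v_1,\ldots,v_k\}$ and partition $V(G)\setminus I$ into sets $U_i$ of vertices whose first neighbor in $I$ is $v_i$; triangle-freeness forces each $U_i$ to be independent, so together with $U_{k+1}=I$ this is a proper $(k+1)$-coloring of $G$. By pigeonhole over the $\binom{2k}{k}<4^k$ compositions $(c_1,\ldots,c_{k+1})$ of $k$, some pattern is realized by at least a $4^{-k}$ fraction of all $k$-MIS's. Take $H=G\big[\bigcup_{c_i>0}U_i\big]$, an induced subgraph using at most $k$ color classes. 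The crucial point you are missing is that each pattern-$c$ MIS of $G$ is automatically still a $k$-MIS of $H$ — maximality in $H$ is inherited from maximality in $G$ because every vertex of $V(H)\setminus I$ has a neighbor in $I\sub V(H)$ — so randomness is needed only to make it \emph{transversal}, not to keep it maximal. Splitting each $U_i$ uniformly at random into $c_i$ parts makes a fixed pattern-$c$ MIS transversal with probability $\prod_i c_i!/c_i^{c_i}\ge k^{-k}$, a bound independent of $n$, and the lemma follows by linearity of expectation. In short: don't color $V(G)$ randomly and hope maximality survives; obtain a proper coloring deterministically from one MIS, restrict to an induced subgraph where maximality is free, and use randomness only to refine the parts.
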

	\begin{proof}
	    We first claim that if $G$ is triangle-free and contains at least one $k$-MIS, then the chromatic number of $G$ is at most $k+1$. Indeed, let $I = \{v_1,\ldots, v_k\}$ be any $k$-MIS of $G$. By maximality, every other vertex of $G$ must have a neighbor among the $v_i$. Let $U_i$ be the set of all $u\in V(G)\setminus I$ adjacent to $v_i$ but not to any $v_j$ for $j<i$. Since $G$ is triangle-free, each $U_i$ is an independent set, so $G$ has a partition into at most $k+1$ independent sets $U_1, U_2, \ldots, U_{k+1}$ (where $U_{k+1} = I$), proving the claim.
	
	    To reduce to the transversal case, for each composition $c=(c_1,\ldots, c_{k+1})$ of $k$ into $k+1$ nonnegative integers $c_1 + \cdots + c_{k+1} = k$, define $m_c$ to be the number of $k$-MIS's in $G$ with $c_i$ elements in $U_i$. As the total number of such compositions is $\binom{2k}{k} < 4^k$, there exists some $c$ for which $m_c \ge 4^{-k} m(G,k)$.
	    
	    Let $H$ be the induced subgraph of $G$ consisting only of those parts $U_i$ for which $c_i>0$. By definition, $H$ has $m_c$ $k$-MIS's which contain $c_i$ vertices from part $U_i$. At most $k$ values of $c_i$ are nonzero, so $H$ is $k$-partite. Define a random $k$-partition $\Pi$ on $H$ by partitioning each $U_i\subseteq V(H)$ uniformly at random into $c_i$ disjoint subsets. If $I$ is a $k$-MIS of $H$ with $c_i$ parts in $U_i$ for each $i$, the probability it is transversal with respect to $\Pi$ is
	    \[
	    \prod_{i=1}^{k+1} \frac{c_i!}{c_i^{c_i}} \ge k^{-k}.
	    \]
	    (Here $0^0 = 0! = 1$.)
	    It follows by linearity of expectation that $\E[T(H,k)] \ge k^{-k}m_c$, when $H$ is given the $k$-partition $\Pi$. Thus, there exists a $k$-partite triangle-free graph $G' = (H,\Pi)$ with $\Mt(G', k) \ge k^{-k}m_c \ge (4k)^{-k} m(G,k)$.
	\end{proof}
	
	Lemma~\ref{lem:transversal} shows that up to a multiplicative factor of $O_k(1)$, for the $t=3$ case it suffices to count only transversal MIS's in $k$-partite graphs. 
	
	\begin{lem}\label{lem:tripartite}
		If $G$ is a triangle-free tripartite graph, then \[\Mt(G,3) \le |V(G)|.\]
	\end{lem}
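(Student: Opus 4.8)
The plan is to prove $\Mt(G,3)\le |V(G)|$ by induction on $|V(G)|$, using an elementary reduction to dispose of vertices lying in few transversal $3$-MIS's and a structural lemma (the only place triangle-freeness enters) to handle what remains. Write $A,B,C$ for the three parts, $\c I$ for the set of transversal $3$-MIS's, $T=|\c I|$, and for a vertex $v$ let $t_v=|\{I\in\c I: v\in I\}|$. Two elementary facts I would record first: (i) any two distinct members of $\c I$ meet in at most one vertex — if $\{a,b,c\}$ and $\{a,b,c'\}$ were both in $\c I$, maximality of the first would force $c'$ to have a neighbour among $a,b,c$, impossible since $c'$ is non-adjacent to $a,b$ (it lies in the second MIS) and to $c$ (same part); and (ii) if $\{a,b,c\}\in\c I$ then maximality gives $A\sm\{a\}\sub N(b)\cup N(c)$, $B\sm\{b\}\sub N(a)\cup N(c)$, $C\sm\{c\}\sub N(a)\cup N(b)$, so in particular $a$ is adjacent to every non-neighbour of $c$ in $B$ other than $b$, and similarly in the other cases.

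Now the inductive step. If some vertex $v$ has $t_v=0$, delete it: $G-v$ is triangle-free tripartite on $n-1$ vertices and every member of $\c I$ is still a transversal $3$-MIS of $G-v$, so $T\le \Mt(G-v,3)\le n-1$ by induction. If some vertex $v$ has $t_v=1$ with $I_0$ the unique member of $\c I$ through it, then every other member of $\c I$ is a transversal $3$-MIS of $G-v$, so $T-1\le \Mt(G-v,3)\le n-1$. Hence we may assume from now on that $t_v\ge 2$ for every vertex $v$.

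The core is to show that under this assumption the triangle-free structure is too rigid to allow $T>n$ (I expect one can in fact show this case is impossible). The structural lemma I would prove is: if $t_c\ge 2$ for $c\in C$, with $\{a_1,b_1,c\},\{a_2,b_2,c\}\in\c I$, then by (ii) $a_2\sim b_1$, and combining triangle-freeness at the edge $a_2b_1$ with the fact (again (ii)) that $b_1$ is adjacent to all of $C\sm(N(a_1)\cup\{c\})$ forces $N(a_2)\cap C\sub N(a_1)\cap C$, hence by symmetry equality. So all ``$A$-vertices'' of the MIS's through $c$ share one neighbourhood $D_c\sub C$ (with $c\notin D_c$), all ``$B$-vertices'' share the complement $C\sm(D_c\cup\{c\})$, and these vertices form a complete-bipartite-minus-a-perfect-matching pattern. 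Applying this simultaneously with $c$ replaced by the $A$- and $B$-vertices pins down $N(v)$ for every vertex $v$ of every member of $\c I$, forces $G$ restricted to any two parts to be a blow-up of a fixed bipartite graph, and forces $d_B(a)+d_B(c)=|B|-1$ together with its two cyclic analogues for every $\{a,b,c\}\in\c I$; from this rigidity one should be able to extract the bound (or a contradiction).

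The main obstacle is precisely this last step: the induction reduces everything to the case in which every vertex lies in at least two transversal $3$-MIS's, and one must squeeze a clean inequality or contradiction out of the rigid neighbourhood structure — a priori the structural constraints form a consistent-looking system, and the work is in showing it nonetheless cannot support more than $n$ transversal $3$-MIS's. In particular one cannot simply charge each $3$-MIS to one of its vertices, since in the near-extremal comatching-plus-isolated-vertex construction a single vertex lies in roughly $n/2$ of them; it is the structural lemma that must rule out such ``spreading out'' across several vertices simultaneously.
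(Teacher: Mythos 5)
Your proposal establishes, correctly, the two structural ingredients that the paper also uses: that any pair of vertices lies in at most one transversal $3$-MIS (this is the paper's Claim~\ref{cl:linear-transversal}), and that for a vertex $c$ in at least two transversal MIS's, the $A$-vertices and $B$-vertices of the MIS's through $c$ have fixed, complementary neighbourhoods in $C$ (this is the paper's Claim~\ref{cl:distinguishing-sets}, with the paper also noting the integer invariant $d$ of Claim~\ref{cl:d-exist}). Your inductive reduction to the case $t_v\ge 2$ for all $v$ is sound in itself — a transversal MIS of $G$ avoiding $v$ is still a transversal MIS of $G-v$, so $T-t_v\le\Mt(G-v,3)$ — but it does not simplify the problem where it matters, and your expectation that the remaining case is ``impossible'' is not something you show (nor is it obviously true).

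The genuine gap is exactly the step you flag as the main obstacle: turning the rigid neighbourhood structure into the count $T\le n$. Your worry that one cannot naively charge each MIS to one of its three vertices is well placed — a single vertex can lie in $\Theta(n)$ transversal MIS's — and nothing in your write-up resolves it. The paper's route is a genuinely new idea at this point: fix one part $U$ and, for each value of the invariant $d$, build an auxiliary bipartite graph $G_d$ on two disjoint copies of $2^{U}$, with an edge labelled $u$ joining $S_u$ to $T_u$ whenever $u\in U_d$. The fact that $S_u\sqcup T_u=U\sm\{u\}$ and that all edge labels are distinct forces $G_d$ to be a forest (any cycle would force $S_1=S_1\cup\{u_1,\dots,u_k\}\sm\{u_1',\dots,u_k'\}$ with all labels distinct, a contradiction). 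Rooting each tree and orienting edges away from the root gives in-degree at most $1$, and then each $u$ is charged to $|V(S_u)|$ or $|W(T_u)|$ depending on orientation. Because the sets $V(S)$ (resp.\ $W(T)$) for distinct $S$ (resp.\ $T$) are disjoint, and each vertex of $G_d$ receives at most one charge, the total over $U_d$ is at most $|V_d|+|W_d|$; summing over $d$ (and adding at most one MIS for each $u$ not in any $U_d$) yields $\Mt(G,3)\le|U|+|V|+|W|$. Without this forest/orientation device, or some substitute for it, the structural facts you have gathered do not by themselves yield the bound, so the proposal is incomplete.
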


	\begin{proof}
	    Let $G$ be a triangle-free tripartite graph with parts $U$, $V$, and $W$.  We break down the proof into a series of structural claims about transversal MIS's in $G$.
		
		\begin{claim}\label{cl:linear-transversal}
			There is at most one transversal MIS containing any pair of vertices $u,v \in V(G)$.
		\end{claim}
		\begin{proof}
		    If $u$ and $v$ lie in the same part, then no transversal triples can contain both $u$ and $v$. Thus we may assume $u\in U$ and $v\in V$.  Suppose $(u,v,w),(u,v,w')\in U\times V\times W$ are transversal MIS's, so in particular they are independent sets.  We have $w\not\sim w'$ since they both lie in $W$, so $\{u,v,w,w'\}$ is an independent set.  Because $\{u,v,w\}$ is an MIS, this is only possible if $w'=w$, and hence there is only one such transversal MIS.
		\end{proof}
		
		If $S\subseteq V(G)$ we define $N_S(v)=N(v)\cap S$ and $N_S^c(v)=S\sm N(v)$.
		\begin{claim}\label{cl:d-exist}
			If $u\in U$ lies in at least two transversal MIS's, then there is an integer $d=d(u)$ such that for every transversal MIS $\{u,v,w\}$ containing $u$ with $v\in V$ and $w\in W$, we have $|N_U(v)|=d,\ |N_U(w)|=|U|-d-1$.
		\end{claim}
		\begin{proof}
		   Let $d=\max_{v\in N_V^c(u)} |N_U(v)|$ and let $v$ be a vertex achieving this maximum. Since $u$ lies in at least two transversal MIS's, Claim~\ref{cl:linear-transversal} implies the existence of an MIS $(u,v',w)\in U\times V\times W$ with $v'\ne v$. Because $v\in N_V^c(u)$ and $\{u,v',w\}$ is an MIS, we must have $v\in N(w)$.  Because $G$ is triangle-free, this means $N_U(v)\cap N_U(w)=\emptyset$, and because $N_U(v),N_U(w)\sub U\sm \{u\}$ we have 
			\[
			|N_U(w)|\le |U|-|N_U(v)|-1=|U|-d-1.
			\]  
			Because $\{u,v',w\}$ is an MIS and $G$ is tripartite, we must have $N_U(v')\cup N_U(w)=U\sm \{u\}$, which forces 
			\[
			|N_U(w)|\ge |U|-|N_U(v')|-1\ge |U|-d-1,
			\] 
			where we used the maximality of $d$.  Thus, any transversal MIS which contains $u$ but not $v$ must contain some $w\in W$ with $|N_U(w)|=|U|-d-1$.  We also have \[d\ge |N_U(v')|\ge |U|-|N_U(w)|-1=d,\]
			so $|N_U(v')|=d$.
			
			We conclude that there exists a $d$ such that any MIS $(u,v',w')\in U\times V\times W$ must satisfy $|N_U(v')|=d$ (the above analysis deals with $v'\ne v$, and the case $v'=v$ is automatic).  A symmetric argument on $W$ yields a $d'$ for which $|N_U(w')|=d'$ for all $w'$ involved.  Also, we showed above that there exists at least one $w'$ with $|N_U(w')|=|U|-d-1$, so $d' = |U|-d-1$ as desired.
		\end{proof}
		
		\begin{claim}\label{cl:distinguishing-sets}
			For every $u\in U$ in at least one transversal MIS, there exist unique sets $S_u,T_u\sub U$ such that any MIS $(u,v,w)\in U\times V\times W$ has $N_U(v)=S_u$ and $N_U(w)=T_u$.  If $u$ is in at least two transversal MIS's then $S_u,T_u$ partition $U\sm \{u\}$.
		\end{claim}
		\begin{proof}
			The result is trivial if $u$ is in at most one transversal MIS, so we can assume that it is in two distinct MIS's $(u,v_1,w_1), (u,v_2,w_2) \in U \times V \times W$.  Let $S_i=N_U(v_i)$ and $T_i=N_U(w_i)$ for $i=1,2$.  Observe that $S_i\cup T_i=U\sm \{u\}$, as otherwise any $u'\in U\sm (S_i\cup T_i\cup \{u\})$ is non-adjacent to all of $\{u,v_i,w_i\}$, contradicting our assumption that this is an MIS.  By the previous claim we have $|S_i|+|T_i|=|U\sm \{u\}|$, so $S_i,T_i$ must partition $U\sm \{u\}$.
			
			If $S_1\ne S_2$, then because $S_2\cup T_2=U\sm \{u\}$ and $|S_1|=|S_2|$, we must have $S_1\cap T_2 \ne \emptyset$, and $G$ being triangle-free implies that $v_1\not \sim w_2$.  This implies that $\{u,v_1,w_1,w_2\}$ is an independent set, contradicting the maximality of $\{u,v_1,w_1\}$.  Thus we must have $S_1=S_2$ and hence $T_1=T_2$. These hold for any choice of two distinct MIS's $(u, v_1, w_1), (u, v_2, w_2) \in U \times V \times W$, proving the claim.
		\end{proof}
		Let $V_d\sub V$ be the vertices satisfying $|N_U(v)|=d$, let $W_d\sub W$ be those satisfying $|N_U(w)|=|U|-d-1$, and let $U_d$ be the vertices which are in at least one MIS and such that every transversal MIS $(u,v,w)\in U\times V \times W$ satisfies $v\in V_d$ and $w\in W_d$.
		\begin{claim}
			For any $d\ge 0$, we have $\Mt(G[U_d \cup V \cup W],3) \le |V_d|+|W_d|$.
		\end{claim}
		\begin{proof}
			For sets $S,T \subseteq U$ define $V(S)=\{v\in V:N_U(v)=S\}$ and similarly define $W(T)$. If $(u,v,w)\in U_d\times V \times W$ is a transversal MIS in the tripartite graph $G$, then Claim~\ref{cl:distinguishing-sets} implies that $\{v,w\}$ is a transversal MIS in the induced bipartite graph $G[V(S_u)\cup W(T_u)]$. Since each vertex in a bipartite graph can be in at most one transversal MIS, the number of transversal MIS's in a bipartite graph is at most the size of the smaller part, so we conclude that \begin{equation}T(G[U_d\cup V\cup W],3)\le \sum_{u\in U_d} \min \{|V(S_u)|,|W(T_u)|\}.\label{eq:T}\end{equation}
			
			We show that the above quantity is small by constructing an auxiliary graph $G_d$ as follows. Let $V(G_d)=2^{U_1}\cup 2^{U_2}$, where $U_1,U_2$ are disjoint copies of $U$.  For each $u\in U_d$, add an edge from $S_u\in 2^{U_1}$ to $T_u\in 2^{U_2}$ and label it by $u$. In particular, all edge labels in $G_d$ are unique. By Claim~\ref{cl:distinguishing-sets}, the existence of an edge labelled by $u$ implies $S_u, T_u$ form a partition of $U\sm \{u\}$.
			
			We claim that $G_d$ is a forest, i.e. it contains no cycle $S_1,T_1,\ldots,S_k,T_k$ for any $k\ge 2$.  Indeed, assume there were such a cycle whose edges were labeled by $u_1,u'_1,\ldots,u_k,u_k'$. We find that $S_1 \sqcup T_1 = U \sm \{ u_1 \}$, $T_1 \sqcup S_2 = U \sm\{u'_1\}$, $S_2 \sqcup T_2 = U \sm \{u_2\}$, and so on. It is straightforward to prove by induction that this implies $S_\ell=S_1\cup \{u_1,\ldots,u_{\ell-1}\}\sm \{u_1',\ldots,u_{\ell-1}'\}$ for $\ell \ge 1$. One can further show that this claim also applies to $S_{k+1}\coloneqq S_1$, i.e. $S_1 = S_1 \cup \{u_1,\ldots, u_k\} \sm \{u'_1,\ldots, u'_{k}\}$. This is impossible because all the edge labels are distinct vertices of $U$, proving that $G_d$ is a forest.
			
			Because $G_d$ is a forest, one can orient the edges of $G_d$ so that each vertex has in-degree at most 1 (this can be done, for example, by rooting each component and then orienting each edge away from the root).
			For each term in \eqref{eq:T}, we use $\min\{|V(S_u)|,|W(T_u)|\}\le |V(S_u)|$ if the edge labeled $u$ in $G_d$ points to $S_u$, and otherwise we use $|W(T_u)|$ as an upper bound.  Because each vertex of $G_d$ has in-degree at most 1, we have that
			\[
			\Mt(G[U_d\cup V \cup W], 3)\le \sum_{u\in U_d} \min\{|V(S_u)|,|W(T_u)| \le \sum_{|S| =d} |V(S)|+\sum_{|T|=|U|-d-1} |W(T)|=|V_d|+|W_d|,
			\]
			where we used the fact that, as $S$ ranges through all $d$-subsets of $U$ and $T$ through $(|U|-d-1)$-subsets, the $V(S),W(T)$ sets partition $V_d,W_d$ by definition. This completes the proof of the claim.
		\end{proof}
		
		To conclude the lemma, we observe by Claim~\ref{cl:d-exist} that every transversal MIS must contain a vertex in some $U_d$ or a vertex in $U\sm \bigcup U_d$ which lies in exactly one transversal MIS. Thus, by the previous claim,
		\[
		\Mt(G, 3) \le |U\sm \bigcup U_d|+\sum_d (|V_d|+ |W_d|)\le |U|+|V|+|W|,
		\]
		since the sets $V_d,W_d$ are disjoint by construction. This proves the result.
	\end{proof}
    
    This lemma will suffice to prove Theorem~\ref{thm:triangle-free}.
	
	\begin{proof}[Proof of Theorem~\ref{thm:triangle-free}]
		The $k=2$ case was proved in Proposition~\ref{prop:bipartite-case}. The lower bounds follow from Theorem~\ref{thm:constructions}, so it remains to show that $m_3(n,3) = O(n)$ and $m_3(n,4) = O(n^2)$. 
		
		Observe that for any triangle-free graph $G$, by Lemma~\ref{lem:transversal} we have $m(G, 3) \le 12^3 \Mt(G', 3)$ for some induced tripartite subgraph $G'$. By Lemma~\ref{lem:tripartite}, $\Mt(G',3) \le |V(G')| \le |V(G)|$. This proves that $m_3(n,3) = O(n)$.
		
		We claim that the $k=4$ case follows from the $k=3$ case.  Indeed, each $v\in V(G)$ lies in at most $m_3 (n-d(v)-1, 3) = O(n)$ MIS's of size $4$, so \[m_3(n,4) \le O(n^2).\qedhere\] 
	\end{proof}

	\section{A Partial Result for $t=3,k=5$}\label{sec:partial}
For $k\ge 5$, we suspect that $m_3(n,k)=\Theta(n^{k/2})$, i.e. that the lower bound of Theorem~\ref{thm:constructions} is tight.  By Lemma~\ref{lem:transversal}, it suffices to prove this bound for $k$-partite $n$-vertex triangle-free graphs.  Unfortunately we are not quite able to do this for $k=5$, though we can prove the following partial result in this direction.
	\begin{prop}\label{prop:k5}
		Let $G$ be an $n$-vertex 5-partite graph on $V_1\cup \cdots \cup V_5$ which is triangle-free.  If there are no edges in any of the sets $V_1\cup V_3,\ V_2\cup V_4$, or $V_2\cup V_5$, then $G$ contains at most $O(n^{5/2})$ MIS's of size 5.
	\end{prop}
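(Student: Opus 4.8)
The plan is as follows. First I reduce to counting \emph{transversal} $5$-MIS's (with respect to the given partition). A $5$-MIS that is not transversal omits some part $V_j$ and has at least two vertices in some part $V_i$; in each such configuration the hypotheses force one of the parts to be dominated by only two of the chosen vertices (because of the missing parts among the chosen ones), and a short case check shows that the whole set is then determined by a single pair of its vertices. Hence there are only $O(n^2)=o(n^{5/2})$ non-transversal $5$-MIS's, and it suffices to bound the transversal ones.

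Next I record the consequences of maximality. Since $V_2$ has no neighbours outside $V_1\cup V_3$, and $V_4$ (resp.\ $V_5$) has no neighbours outside $V_1\cup V_3\cup V_5$ (resp.\ $V_1\cup V_3\cup V_4$), any transversal $5$-MIS $(u_1,\dots,u_5)$ satisfies
\[
V_2\sm\bigl(N(u_1)\cup N(u_3)\bigr)=\{u_2\},\qquad V_5\sm\bigl(N(u_1)\cup N(u_3)\cup N(u_4)\bigr)=\{u_5\},
\]
together with the analogous identities for $V_1,V_3,V_4$. In particular $(u_1,u_3,u_4)$ determines the whole $5$-MIS, and for fixed $(u_1,u_3)$ the completing pairs $(u_4,u_5)$ are among the transversal $2$-MIS's of the bipartite graph $G[A_4\cup A_5]$, where $A_4=V_4\sm N(\{u_1,u_3\})$ and $A_5=V_5\sm N(\{u_1,u_3\})$. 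Writing $\tau(u_1,u_3)$ for the number of transversal $2$-MIS's of $G[A_4\cup A_5]$ (a partial matching, so $\tau(u_1,u_3)\le\min(|A_4|,|A_5|)$), this gives
\[
\#\{\text{transversal }5\text{-MIS's}\}\ \le\ \sum_{(u_1,u_3)\in V_1\times V_3}\tau(u_1,u_3).
\]
Dually, for a fixed non-edge $(u_4,u_5)\in V_4\times V_5$ the completing pairs $(u_1,u_3)$ inject into the transversal $3$-MIS's of the tripartite graph $G[B_1\cup B_3\cup V_2]$, where $B_i=V_i\sm N(\{u_4,u_5\})$, so by Lemma~\ref{lem:tripartite} there are at most $|B_1|+|B_3|+|V_2|=O(n)$ of them.

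It remains to prove $\sum_{(u_1,u_3)}\tau(u_1,u_3)=O(n^{5/2})$. The naive estimates (using $\tau\le n$ for each of the $n^2$ pairs, or the dual $O(n)$ bound for each of the $O(n^2)$ non-edges $(u_4,u_5)$) only give $O(n^3)$, so one must gain a factor of $n^{1/2}$. I would split the pairs $(u_1,u_3)$ according to whether $\tau(u_1,u_3)\le n^{1/2}$ (``light'') or $\tau(u_1,u_3)>n^{1/2}$ (``heavy''); the light pairs contribute at most $n^2\cdot n^{1/2}=n^{5/2}$. For a heavy pair, $G[A_4\cup A_5]$ contains a copy of $K_{s,s}$ minus a perfect matching with $s>n^{1/2}$, all $2s$ of whose vertices are common non-neighbours of $u_1$ and $u_3$; the aim is to use triangle-freeness together with the dual tripartite bound above to show that such heavy pairs are rare and contribute only $O(n^{5/2})$ in total.

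The last step is the main obstacle. It is where triangle-freeness must enter essentially — the extremal $C_5$-blowup construction of Section~\ref{sec:constructions} shows the bound $O(n^{5/2})$ is tight, with its heavy pairs arising from the ``shared-edge'' phenomenon of that construction — and I expect it to require a somewhat delicate case analysis (for instance, further splitting on the sizes of the relevant neighbourhoods in $V_2$, $V_4$, $V_5$, or a weighted double count of the ``non-edge triangles'' $(u_1,u_4,u_5)$ that underlie the co-matchings) rather than a one-line estimate.
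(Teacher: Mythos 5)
Your setup (reduce to transversal MIS's, note that a transversal MIS is determined by $(u_1,u_3,u_4)$, bound the completions of a fixed pair $(u_1,u_3)$ by the size of a co-matching in $G[A_4\cup A_5]$) is sound and overlaps with the paper's framing. But the argument has a genuine gap exactly where you flag it: you never prove that the heavy pairs (those with $\tau(u_1,u_3)>n^{1/2}$) contribute $O(n^{5/2})$ in total, and the light/heavy dichotomy is not the mechanism by which the paper obtains the $n^{1/2}$ saving. Your observation that the dual count over non-edges $(u_4,u_5)$ gives only $O(n^3)$ is correct, and combining the two naive bounds does not close the gap; a new structural ingredient is required.

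The paper's proof supplies that ingredient in the form of a disjointness lemma rather than a light/heavy split. For $a\in V_1$, $b\in V_2$, $c\in V_3$, it defines $D_{a,b,c}\subseteq V_4$ and $E_{a,b,c}\subseteq V_5$ to be the fourth and fifth coordinates of MIS's extending $\{a,b,c\}$, and the key claim is that for each fixed $b$, the product sets $Q_{a,b,c}=D_{a,b,c}\times E_{a,b,c}$ are \emph{pairwise disjoint} over distinct $(a,c)$. The proof of this claim is a delicate case analysis using triangle-freeness and the forbidden-edge hypotheses, centred on the set $\tilde V_3=V_3\sm N(b)$; it shows that if two pairs $(a',c')\ne(a'',c'')$ shared both a $d$ and an $e$, the neighbourhood constraints in $\tilde V_3$ would force $c'=c''$. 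Since $|D_{a,b,c}|=|E_{a,b,c}|=t(a,b,c)$, disjointness gives $\sum_{(a,c)\in P_b}t(a,b,c)^2\le n^2$ for each $b$; Cauchy--Schwarz then yields $t(b)^2\le n^2|P_b|$, and a separate observation (each $(a,c)$ lies in at most one $P_b$, because $b$ is forced to be the unique vertex of $V_2\sm(N(a)\cup N(c))$) gives $\sum_b|P_b|\le n^2$. A second Cauchy--Schwarz over $b$ finishes: $|\mathcal I|^2\le n\sum_b t(b)^2\le n^3\sum_b|P_b|\le n^5$. None of this is contained in or implied by your proposal; in particular, your plan to argue that ``heavy pairs are rare'' by inspecting large co-matchings in $G[A_4\cup A_5]$ has no clear path to the required second-moment bound $\sum_{(a,c)}t(a,b,c)^2\le n^2$, which is really the heart of the matter. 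So the proposal, as written, does not constitute a proof.
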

	In particular, this result shows that if all the edges of $G$ lie in the blowup of a 5-cycle (or more generally, a 5-cycle together with two extra edges), then $G$ does not contain significantly more MIS's of size 5 than the construction given in Theorem~\ref{thm:constructions}.
\begin{proof}
By Lemma~\ref{lem:transversal}, it suffices to show that such a graph $G$ has at most $O(n^{5/2})$ transversal MIS's.  Throughout this proof we refer to transversal 5-MIS's simply as MIS's for ease of presentation.

For $a\in V_1,b\in V_2,c\in V_3$, let $D_{a,b,c}$ denote the set of $d\in V_4$ such that there exists an MIS $I$ with $\{a,b,c,d\}\sub I$.  Similarly define $E_{a,b,c}\sub V_5$.  The key claim we need is the following.

\begin{claim}\label{cl:k5}
		For any $b\in V_2$, the sets $Q_{a,b,c}:=D_{a,b,c}\times E_{a,b,c}$ are disjoint for distinct choices of $(a,c)\in V_1\times V_3$.
	\end{claim}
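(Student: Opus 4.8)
The plan is to argue by contradiction. Suppose $Q_{a,b,c}\cap Q_{a',b,c'}\ne\emptyset$ for some $(a,c)\ne(a',c')$ in $V_1\times V_3$, and fix a pair $(d,e)$ in this intersection. Unwinding the definitions, $d\in D_{a,b,c}$ and $e\in E_{a,b,c}$ yield transversal $5$-MIS's containing $\{a,b,c,d\}$ and $\{a,b,c,e\}$; since a transversal $5$-MIS has one vertex in each part and four of the parts are already pinned down, we may write these as $I_1=\{a,b,c,d,e_1\}$ and $I_3=\{a,b,c,d_3,e\}$ for some $e_1\in V_5$, $d_3\in V_4$, and similarly $d\in D_{a',b,c'}$, $e\in E_{a',b,c'}$ yield $I_2=\{a',b,c',d,e_2\}$ and $I_4=\{a',b,c',d_4,e\}$. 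All four MIS's contain the same vertex $b$ because the claim fixes $b$. Throughout I will use that $G$ is triangle-free, that each $I_\ell$ is independent and maximal, and that $G$ has no edge between $V_1$ and $V_3$, none between $V_2$ and $V_4$, and none between $V_2$ and $V_5$. The target is $a=a'$ and $c=c'$, which contradicts $(a,c)\ne(a',c')$.

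First I would dispose of the case $d\not\sim e$. In this case $e$ is non-adjacent to $a$ and $c$ (because $I_3$ is independent), to $b$ (no edge between $V_2$ and $V_5$), to $d$ (this case), and to $e_1$ (same part $V_5$), so maximality of $I_1$ forces $e\in I_1$, i.e. $e=e_1$ and $I_1=\{a,b,c,d,e\}$. The symmetric argument with $I_4$ and the maximality of $I_2$ gives $I_2=\{a',b,c',d,e\}$. Now $a'$ is non-adjacent to $a$ (same part), to $c$ (no edge between $V_1$ and $V_3$), and to $b,d,e$ (these all lie in the independent set $I_2$), so the maximality of $I_1=\{a,b,c,d,e\}$ forces $a'=a$; running the identical argument with $c'\in V_3$ in place of $a'$ gives $c'=c$.

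The case $d\sim e$ is where I expect the real difficulty, since now $\{a,b,c,d,e\}$ need not be an MIS and one has to build a forbidden triangle out of the auxiliary vertices $e_1$ and $d_3$. First note $d_3\ne d$: $I_3$ independent gives $d_3\not\sim e$, while $d\sim e$. Hence $d_3\notin I_1$, and since $d_3$ is non-adjacent to $d$ (same part), to $b$ (no edge between $V_2$ and $V_4$), and to $a$ and $c$ (because $I_3$ is independent), maximality of $I_1$ forces $d_3\sim e_1$. Now suppose $a\ne a'$, so $a'\notin I_1\cup I_3$. Maximality of $I_1$, together with $a'$ being non-adjacent to $a$ (same part), $c$ (no edge between $V_1$ and $V_3$), and $b,d$ (from $I_2$ independent), forces $a'\sim e_1$; maximality of $I_3$, together with $a'$ non-adjacent to $a,c$ as before and to $b,e$ (from $I_4$ independent), forces $a'\sim d_3$. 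Then $\{a',d_3,e_1\}$ occupies the three distinct parts $V_1,V_4,V_5$ and carries all three edges $a'\sim e_1$, $a'\sim d_3$, $e_1\sim d_3$, contradicting triangle-freeness; hence $a=a'$. Repeating the argument with $c'\in V_3$ in place of $a'$ produces the triangle $\{c',d_3,e_1\}$ unless $c=c'$. So $(a,c)=(a',c')$ in this case as well, finishing the contradiction.

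To summarize the pressure points: the delicate step is the triangle construction in the case $d\sim e$, where one needs $a'\sim e_1$, $a'\sim d_3$ and $e_1\sim d_3$ (or the same with $c'$) to appear simultaneously. This is precisely where ``no edge between $V_1$ and $V_3$'' and ``no edge between $V_2$ and $V_4$'' are used, to rule out the adjacencies $a'\sim c$ (resp. $c'\sim a$) and $d_3\sim b$ that would otherwise let $I_1$ or $I_3$ absorb the offending vertex without forcing the edge one wants; the remaining hypothesis ``no edge between $V_2$ and $V_5$'' is what makes the easy case $d\not\sim e$ work. Triangle-freeness is used only in the $d\sim e$ case, and $V_2$ plays no role beyond contributing the single fixed vertex $b$.
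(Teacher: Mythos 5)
Your proof is correct, and it takes a genuinely different route from the paper's. The paper's argument first establishes auxiliary facts (that the three relevant $V_5$-vertices are pairwise distinct, and that $d\sim e$, $d'\sim e'$, $d''\sim e''$), then introduces $\tilde V_3 = V_3\setminus N(b)$ and compares the neighborhoods $N_d, N_{d'}, N_e, N_{e'}$ inside $\tilde V_3$; triangle-freeness together with the maximality identities $N_d\cup N_{e'}=\tilde V_3\setminus\{c'\}=N_{d'}\cup N_e$ (and the analogous pair for $c''$) forces $N_d\cup N_e=\tilde V_3\setminus\{c'\}=\tilde V_3\setminus\{c''\}$, which is impossible when $c'\ne c''$. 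Your argument instead splits on whether $d\sim e$: in the easy case you absorb $e$ into both $I_1$ and $I_2$ by maximality and then absorb $a'$ and $c'$ into $I_1$; in the hard case you first force $d_3\sim e_1$ from maximality of $I_1$, and then whenever $a\ne a'$ (resp.\ $c\ne c'$) you force $a'\sim e_1$ and $a'\sim d_3$ (resp.\ with $c'$), producing an explicit forbidden triangle $\{a',d_3,e_1\}$. Your route is more concrete and elementary, exhibiting a triangle directly rather than reasoning about neighborhood sets, and it proves $a=a'$ and $c=c'$ separately rather than reducing by symmetry to $c'\ne c''$ as the paper does. One small side remark on your summary of where each hypothesis enters: your use of ``no edges in $V_2\cup V_5$'' (to get $e\not\sim b$) and ``no edges in $V_2\cup V_4$'' (to get $d_3\not\sim b$) is in fact redundant, since $e,b\in I_3$ and $d_3,b\in I_3$ already give these non-adjacencies from independence; only the ``no edges in $V_1\cup V_3$'' hypothesis is essential inside this claim, and the other two bipartite exclusions are really used later in Proposition~\ref{prop:k5}. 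This does not affect the correctness of the proof.
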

	\begin{proof}
		Assume for contradiction that there exist $(a',c'),(a'',c'')\in V_1\times V_3$ with $(a',c')\ne (a'',c'')$ such that there exist $d\in D_{a',b,c'}\cap D_{a'',b,c''}$ and $e\in E_{a',b,c'}\cap E_{a'',b,c''}$.  Without loss of generality we can assume $c'\ne c''$. By definition of $d$ and $e$, there exist $e',e''\in V_5$ and $d',d''\in V_4$ such that
		\[I_d':=\{a',b,c',d,e'\},\ I''_d:=\{a'',b,c'',d,e''\},\ I'_e:=\{a',b,c',d',e\},I''_e=\{a'',b,c'',d'',e\}\] are MIS's.
		
		We first claim that all of $e,e',e''$ are distinct vertices. Indeed if, say, $e'\in \{e,e''\}$, then $I_d'\cup \{c''\}$ would be an independent set (we have $c''\not\sim a'$ since there are no edges in $V_1\cup V_3$, and $c''\not\sim b,d,e'$ since $c''$  lies in an MIS with each of these if $e'\in \{e,e''\}$), contradicting the maximality of $I_d'$ since $c''\ne c'$. Thus $e'\ne e,e''$, and by a symmetric argument $e''\ne e,e'$.  
		
		We claim that $d\sim e,\ d'\sim e',$ and $d''\sim e''$.   Indeed if, say, $d'\not\sim e'$, then $I_e'\cup \{e'\}$ would also be an independent set (since $e'\not\sim a',c'$), contradicting the maximality of $I'_e$ since $e'\ne e$. A symmetric argument shows $d''\sim e''$, and if $d\not\sim e$, then $I_d'\cup \{e\}$ would be an independent set.  
		
		Define \[\tilde{V}_3=V_3\sm N(b),\hspace{2em} N_x=N(x)\cap \tilde{V}_3\hspace{1em} \forall x\in V(G).\] Note that $c',c''\notin \tilde{V}_3$ since they are in an MIS with $b$.  Because $I_d',I_e'$ are MIS's (and because there are no edges in $V_1\cup V_3$), we must have \begin{equation}N_d\cup N_{e'}=\tilde{V}_3\sm \{c'\}=N_{d'}\cup N_e.\label{eq:N}\end{equation}
		That is, every vertex in $V_3\sm \{c'\}$ which is not adjacent to $b$ must be adjacent to either $d$ or $e'$.  However, because $d\sim e$ and $N_d\sub N(d),N_e\sub N(e)$, we must have $N_d\cap N_e=\emptyset$ since $G$ is triangle-free.  This together with \eqref{eq:N} implies that $N_d\sub N_{d'}$ and $N_e\sub N_{e'}$.  A symmetric argument using $d'\sim e'$ implies that $N_{d'}\sub N_d$ and $N_{e'}\sub N_e$, so we have $N_d=N_{d'},\ N_e=N_{e'}$.  A symmetric argument gives $N_d=N_{d''}$ and $N_{e}=N_{e''}$, but this in total implies \[\tilde{V}_3 \sm \{c'\}=N_d\cup N_{e'}=N_d\cup N_e=N_d\cup N_{e''}=\tilde{V}_3\sm \{c''\},\]
		which is a contradiction to $c'\ne c''$.  This proves the claim.
	\end{proof}
For $b\in V_2$, define $t(b)$ to be the number of MIS's containing $b$, and for $a\in V_1,c\in V_3$ define $t(a,b,c)$ to be the number of MIS's $I$ with $\{a,b,c\}\sub I$.  Observe that $|D_{a,b,c}|=|E_{a,b,c}|=t(a,b,c)$ since every MIS $I$ counted by $t(a,b,c)$ is uniquely determined by specifying either $I\cap V_4$ or $I\cap V_5$, so in particular $|Q_{a,b,c}|=t(a,b,c)^2$.  If $P_b\sub V_1\times V_3$ denotes the set of pairs $(a,c)$ such that $t(a,b,c)\ge 1$, then by the claim we have for all $b\in V_2$ that
\[
\sum_{(a,c)\in P_{b}}t(a,b,c)^{2}=\sum_{a,c\in P_{b}}|Q_{a,b,c}|\le |V_4\times V_5|\le n^{2}.
\]
It is not difficult to see that $\sum_{(a,c)\in P_{b}}t(a,b,c)=t(b)$. Using this with the above inequality and the Cauchy-Schwarz
inequality, we find for all $b\in V_2$ that
\begin{equation}
t(b)^{2}=\l(\sum_{a,c\in P_{b}}t(a,b,c)\r)^{2}\le|P_{b}|\sum_{a,c\in P_{b}}t(a,b,c)^{2}\le n^{2}|P_{b}|.
\label{eq:tbornottb}\end{equation}
Observe that if $I=\{a,b,c,d,e\}$ is an MIS with $a\in V_1,b\in V_2,c\in V_3$, then we must have $N(a)\cup N(c)=V_2\sm \{b\}$ since $I$ is maximal and since there are no edges from $V_2$ to $V_4\cup V_5$.  Thus a pair $(a,c)\in V_1\times V_3$ can be in at most one $P_b$ set (namely the $b$ satisfying $N(a)\cup N(c)=V_2\sm \{b\}$), so the sets $P_{b}$ are distinct subsets
of $V_1\times V_3$ for distinct choices of $b$.  This implies
\[
\sum_{b\in V_2}|P_{b}|\le n^{2}.
\]

Let $\c{I}$ denote the set of MIS's of $G$.  Then $\sum_{b\in V_2}t(b)=|\c{I}|$, so by applying Cauchy-Schwarz, \eqref{eq:tbornottb}, and the above inequality, we find
\[
|\c{I}|^{2}=\Big(\sum_{b}t(b)\Big)^{2}\le n\sum_{b\in V_2}t(b)^{2}\le n^{3}\sum_{b\in V_2}|P_{b}|\le n^{5},
\]
as desired.
\end{proof}

\section{Hypergraphs}\label{sec:hypergraphs}
Lastly, we consider an analogous problem for hypergraphs.  We define $m_t^r(n,k)$ to be the maximum number of MIS's of size $k$ in a $K_t^r$-free $r$-uniform hypergraph on $n$ vertices.  Somewhat surprisingly, we are able to completely determine the order of magnitude of $m_t^r(n,k)$ for $k$ fixed and $r\ge 3$.
	\begin{prop}
		For $n\ge 4$, we have
		\[m_4^3(n,2)=n-1.\]
		For any other set of parameters satisfying $r\ge 3,\ k\ge r-1,$ and $t\ge r+1$; if $s\in \{0,1,\ldots,k-1\}$ with $n\equiv s\mod k$ then
		\[m_t^r(n,k)\ge \floor{n/k}^{k-s}\ceil{n/k}^s.\]
	\end{prop}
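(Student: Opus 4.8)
The two assertions are handled separately, the second (the lower bound) being the substantial one. For the exact value $m_4^3(n,2)=n-1$, the lower bound is witnessed by the $3$-graph $H$ on $[n]$ whose edges are all triples through a fixed vertex $v$: since every triple of $H$ contains $v$, no $4$-set $S$ can have all four of its triples in $E(H)$ (if $v\notin S$ then $S$ spans no edge, and if $v\in S$ then the triple of $S$ avoiding $v$ is not an edge), so $H$ is $K_4^3$-free, while $\{v,u\}$ is a $2$-MIS for each $u\neq v$ because $\{v,u,w\}\in E(H)$ for every $w$. For the upper bound, recall that in a $3$-graph every $2$-set is independent, so $\{x,y\}$ is a $2$-MIS exactly when $\{x,y,w\}\in E(H)$ for all $w\notin\{x,y\}$. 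If $\{x,y\}$ and $\{x',y'\}$ were disjoint $2$-MIS's, then all four triples spanned by $\{x,y,x',y'\}$ would be edges, a $K_4^3$; hence the family of $2$-MIS's is a pairwise-intersecting family of $2$-sets, and any such family is either a star or a triangle, so as $n\geq 4$ its size is at most $\max(n-1,3)=n-1$.

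For the lower bound in all remaining cases, I will partition $[n]$ into $k$ parts $P_1,\dots,P_k$ of sizes as equal as possible, so there are exactly $\floor{n/k}^{k-s}\ceil{n/k}^{s}$ transversals of the partition, and then build a $K_t^r$-free $r$-graph $H$ on these parts in which every transversal is an MIS, which gives the bound at once. In every version of $H$ each edge will contain two vertices in a common part, so no transversal spans an edge; and each edge will have the form $\{x,y\}\cup T$ with $\{x,y\}$ inside some part $P_i$ and $T$ a transversal of some $(r-2)$-element set of other parts (legal since $k-1\geq r-2$). This shape forces maximality: given a transversal $I$ and $w\in P_i$ with $w\neq I\cap P_i$, adjoining to $\{I\cap P_i,\ w\}$ the coordinates of $I$ in a suitable $(r-2)$-set of parts produces an edge inside $I\cup\{w\}$. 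So every transversal is an MIS, and the only real issue is clique-freeness.

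When $r\geq 4$ (hence $k\geq r-1\geq 3$) I take $H=H_0$, the $r$-graph of all $r$-sets with exactly two vertices in one part and exactly one in each of $r-2$ other parts. If $S$ spanned a $K_t^r$ with $t\geq r+1$, then every $r$-subset of $S$ would span exactly $r-1$ parts with one part doubled; this forces $S$ to span exactly $r-1$ parts, each of multiplicity at most $2$ (else a tripled part yields a non-edge $r$-subset), with at least two parts of multiplicity $2$ (since $t\geq r+1=(r-1)+2$), and then taking two vertices from each of two such parts plus $r-4$ further vertices (possible since $0\leq r-4\leq r-3$) yields an $r$-subset of $S$ not in $H_0$, a contradiction. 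When $r=3$ and $k=2$ — in which case $t\geq 5$, since $(r,k,t)=(3,2,4)$ is the excluded triple — this same $H_0$ is just the set of all non-monochromatic triples, and it is $K_t^3$-free because a clique on $\geq 5$ vertices must span exactly two parts and hence contain three vertices in a single part, a non-edge.

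The one genuinely asymmetric case is $r=3$ with $k\geq 3$, where $H_0$ fails (two vertices in each of two parts already span a $K_4^3$). Here I instead fix a function $a\colon[k]\to[k]$ with $a(i)\neq i$ and $a(a(i))\neq i$ for all $i$ (possible when $k\geq 3$, e.g.\ a cyclic shift), and let the edges of $H$ be all triples $\{x,y,z\}$ with $\{x,y\}\subseteq P_i$ for some $i$ and $z\in P_{a(i)}$. A $K_4^3$ in this $H$ would still have to consist of two vertices in each of two parts $P_i,P_j$, but then the triples with two vertices in $P_i$ force $a(i)=j$ while those with two vertices in $P_j$ force $a(j)=i$, contradicting $a(a(i))\neq i$; since a $K_t^3$ with $t\geq 5$ contains a $K_4^3$, this $H$ is $K_t^3$-free for every $t\geq 4$. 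I expect this symmetry-breaking step to be the crux: the obvious ``all $r$-sets with a repeated part'' construction contains copies of $K_{r+1}^r$ when $r=3$, so one must twist it using a $2$-cycle-free map, and then observe that the leftover $r=3,\ k=2$ instances are exactly those already pinned down by the first assertion. Everything else reduces to checking the elementary size inequalities ($k-1\geq r-2$, $0\leq r-4\leq r-3$, and so on) that make these edge-choices well-defined.
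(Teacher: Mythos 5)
Your proposal is correct and takes essentially the same approach as the paper: a fixed-vertex star plus a pairwise-intersection argument for $m_4^3(n,2)$ (you use the elementary star-or-triangle classification of intersecting $2$-set families where the paper invokes Erd\H{o}s--Ko--Rado), and the same balanced-partition blowup for the general lower bound, with each edge doubling up one part and taking one vertex from each of $r-2$ others so that every transversal is an MIS. The only divergence is organizational: the paper gives a single cyclic construction (two vertices in $V_i$, one in each of $V_{i+1},\dots,V_{i+r-2}$ with indices mod $k$) and a unified $K_{r+1}^r$-freeness check splitting on whether two parts are doubled, whereas you argue three cases separately ($r\geq 4$ with the fully symmetric hypergraph, $(r,k)=(3,2)$, and $r=3,\ k\geq 3$ via an arbitrary $2$-cycle-free map $a$), of which the paper's cyclic shift $a(i)=i+1$ is the special instance that works uniformly.
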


	Note that determining $m_t^r(n,k)$ is trivial if $k<r-1$ (since any set of $k+1$ vertices is an independent set) or if $t= r$.  Also note that we always have the upper bound $m_t^r(n,k)\le {n\choose k}\le (en/k)^k$, so these lower bounds are close to best possible.

	\begin{proof}
		We first consider the case $r=3,\ t=4,\ n\ge 4,$ and $k=2$.  A lower bound comes from taking $H$ to be the 3-graph consisting of every edge containing a given vertex $v$, since in this case every set $\{u,v\}$ is a 2-MIS.  For the upper bound, observe that if $\{u,v\}$ is an MIS of a hypergraph $H$, then we must have that $\{u,v,w\}\in E(H)$ for all $w\ne u,v$ (otherwise the set would not be maximal).  Thus if $\{u,v\},\{w,x\}$ are disjoint MIS's then $\{u,v,w,x\}$ form a $K_4^3$ in $H$, which we have assumed not to be the case.  We conclude that every MIS of size 2 pairwise intersects, and by the Erd\H{o}s-Ko-Rado Theorem~\cite{erdos1961intersection}, there can be at most $n-1$ such pairs.
		
		We now assume our parameters are not of this form and construct an $r$-graph $H$ as follows.  Partition $V(H)$ into sets $V_1,\ldots,V_k$ with $s$ of these having size $\ceil{n/k}$ and the rest size $\floor{n/k}$.  The edges of $H$ will be the $r$-sets which intersect some $V_i$ in exactly two vertices and each $V_{i+j}$ with $1\le j\le r-2$ in exactly one vertex.  Here we write our indices mod $k$, and this construction is well defined since $k\ge r-1$.
		
		We claim that any set $I=\{v_1,\ldots,v_k\}$ with $v_i\in V_i$ is an MIS.  Indeed, $I$ is an independent set because $I$ intersects each $V_i$ in exactly one vertex, so it contains none of the edges of $H$.  Moreover, any additional vertex will cause $I$ to intersect some $V_i$ in exactly two vertices and each $V_{i+j}$ in exactly one vertex, so this set is also maximal.  There are exactly $\floor{n/k}^{k-s}\ceil{n/k}^s$ ways to choose such an MIS $I$, so we will be done provided we can show that $H$ is $K_t^r$-free for any $t\ge r+1$.
		
		Assume for contradiction that $H$ contains a set $K$ which induces a $K_{r+1}^r$.  Observe that if $S\sub K$ has at most $r$ vertices, then there must exist some edge containing $S$ in $H$.  Because every edge of $H$ contains at most 2 vertices of any $V_i$ set, for $r\ge 3$ we must have $|V_i\cap K|\le 2$ for all $i$.  Similarly because each edge of $H$ contains at most 3 vertices of any $V_i\cup V_j$ set, for $r\ge 4$ we must have that there exists no $i\ne j$ with $|V_i\cap K|=|V_j\cap K|=2$, but this implies that $K$ can contain at most 1 edge, a contradiction.  Thus we can assume $r=3$ and that $|V_i\cap K|,|V_j\cap K|=2$ for some $i\ne j$.  Without loss of generality we can assume that $|V_1\cap K|=2$ (if no such $i$ exists then $K$ contain no edges).  The only vertices which are in an edge containing two vertices of $V_1$ are those in $V_2$, so the other vertices of $K$ must all be in $V_2$, and hence $|K|=4$ since $|V_j\cap K|=2$ for some $j\ne 1$.  Such a set of vertices is a clique in $H$ only if $k=2$, which we assumed not to be the case.
	\end{proof}
	
	\section{Concluding Remarks}\label{sec:concluding}
	Our main open problem is the following, which claims that the lower bounds from Theorem~\ref{thm:constructions} are essentially tight when $k$ is sufficiently large in terms of $t$.   
	
	\begin{conj}\label{conj:main}
	For all $t\ge 3$, $k\ge 2(t-1)$,
	\[
	m_t(n,k) =O(n^{(t-2)k/(t-1)}).
	\]
	\end{conj}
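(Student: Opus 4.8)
The conjectured bound matches the tight-cycle construction of Theorem~\ref{thm:constructions}(b), so the plan is to reverse-engineer it.  Writing the target exponent as $\frac{(t-2)k}{t-1}=k-\frac{k}{t-1}$, we must beat the trivial count of $n^k$ ordered $k$-tuples by showing that, on average, a $\frac{1}{t-1}$-fraction of the coordinates of a $k$-MIS is redundant.  For $t=3$, Lemma~\ref{lem:transversal} reduces this to counting transversal $k$-MIS's in a $k$-partite triangle-free graph $G$ on parts $V_1,\dots,V_k$, where a ``coordinate'' is $I\cap V_i$; so the first concrete goal is $\Mt(G,k)\le n^{k/2+o(1)}$ for every such $G$, with $k=5$ the smallest open case.

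For $k=5$ the task is to upgrade Proposition~\ref{prop:k5}, which carries out the argument under the hypothesis that all edges lie in (essentially) a blowup of $C_5$, to arbitrary triangle-free $5$-partite graphs, where the pattern of which pairs of parts carry edges is no longer prescribed.  The ingredients needed are generalizations of the degree invariant $d(u)$ from Lemma~\ref{lem:tripartite} and of the disjoint-product-set argument of Claim~\ref{cl:k5}: one wants to show that whenever a vertex $v\in V_i$ lies in many transversal MIS's, triangle-freeness forces the neighborhoods of $v$'s various MIS-partners, restricted to a bounded window of parts near $i$, to be nearly complementary.  Feeding this structural input into a chain of Cauchy--Schwarz inequalities over the parts, exactly as in the proof of Proposition~\ref{prop:k5}, should then give $\Mt(G,5)\le n^{5/2+o(1)}$, and one would hope the same scheme extends to all $k$.

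For $t\ge 4$ there is an additional difficulty even in the reduction: a $K_t$-free graph with a $k$-MIS need not have bounded chromatic number, since the neighborhood of a vertex is only $K_{t-1}$-free rather than independent, and so Lemma~\ref{lem:transversal} does not apply verbatim.  One natural attempt is induction on $t$, using that $G[N(v)]$ is $K_{t-1}$-free and that a $k$-MIS through $v$ restricts to a maximal independent set of $G-N[v]$, and then hoping to telescope a recursion relating $m_t$ to $m_{t-1}$; alternatively one first isolates a suitable $K_t$-free $k$-partite graph and runs the same scheme of Cauchy--Schwarz inequalities over the parts as above.

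The main obstacle, in every case, is this structural step.  The invariants used in Lemma~\ref{lem:tripartite} and Proposition~\ref{prop:k5} are very finely tuned to $t=3$ and small $k$; for general $k$ it is unclear what the correct local invariant is, and less clear still how to control the interactions of many such invariants when the number of parts is large and the dependency pattern among the $V_i$ is arbitrary rather than a blowup of $C_k$.  A cleaner but more ambitious route would be a removal-lemma-type statement dual to the construction of Gowers and B. Janzer --- roughly, that a $K_t$-free graph cannot contain many $K_{t-1}$'s each extending to its own distinct $K_t$-like configuration --- but a sharp version of this seems to require new ideas in hypergraph removal, which is presumably why the conjecture remains open.
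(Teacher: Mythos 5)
The statement you were asked to prove is Conjecture~\ref{conj:main}, which the paper leaves open --- there is no proof in the paper to compare against. The paper establishes the conjectured bound only in the single case $(t,k)=(3,4)$, where Theorem~\ref{thm:triangle-free} gives $m_3(n,4)=\Theta(n^2)$, and supplies the partial result Proposition~\ref{prop:k5} for $(t,k)=(3,5)$ under the extra structural hypothesis that all edges of $G$ lie (essentially) in a blowup of $C_5$. You correctly recognize this and frame your submission as a survey of approaches rather than a proof, so there is no correctness claim to assess; what can be assessed is whether your account of the obstacles and the plausible next steps is faithful, and it is.

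In particular, your observations line up with what the paper itself says in Section~\ref{sec:concluding}: (i) for $t=3$, Lemma~\ref{lem:transversal} reduces the problem to bounding $\Mt(G,k)$ for $k$-partite triangle-free $G$, and the smallest genuinely open instance is $m_3(n,5)$; (ii) upgrading Proposition~\ref{prop:k5} to arbitrary triangle-free $5$-partite graphs is exactly the content of the paper's Conjecture~7.3; and (iii) for $t\ge 4$ the transversal reduction breaks down, since a $K_t$-free graph with a $k$-MIS need not have chromatic number $O_k(1)$ --- the paper flags this same issue after Question~\ref{quest:K4}, where it emphasizes that Lemma~\ref{lem:transversal} no longer applies once triangle-freeness is dropped. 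Your closing suggestion of a removal-lemma-type statement dual to the Gowers--Janzer construction is also in the spirit of the sketch the paper gives for Question~\ref{quest:K4}, which invokes the regularity-lemma fact that a graph in which every edge lies in exactly one triangle has $n^{2-o(1)}$ edges. None of this constitutes a proof, but it is an accurate map of the known territory and the gaps, and you are right that closing those gaps --- especially finding the correct local invariant to replace $d(u)$ and $S_u,T_u$ for larger $k$, and finding any usable reduction at all when $t\ge 4$ --- is precisely what is missing.
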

	
	This conjecture seems difficult, even when restricted to triangle-free graphs where we conjecture $m_3(n,k)=O(n^{k/2})$ for all $k\ge 4$.  In particular, we do not even know if the following is true.
	\begin{conj}
	There exists an integer $k>4$ such that
	\[m_3(n,k)=O(n^{k-3}).\]
	\end{conj}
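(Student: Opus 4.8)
This conjecture is Conjecture~\ref{conj:main} for $(t,k)=(3,6)$ in disguise. Indeed, if $m_3(n,6)=O(n^3)$, then choosing the first $k-6$ vertices of a $k$-MIS in $O(n^{k-6})$ ways and applying the $k=6$ bound to the triangle-free graph induced on the non-neighbours of those vertices gives $m_3(n,k)\le O(n^{k-6})\cdot m_3(n,6)=O(n^{k-3})$ for every $k\ge 6$. (No value $k\le 5$ can work, since $m_3(n,5)=\Om(n^{5/2})$ by Theorem~\ref{thm:constructions}(b) already exceeds $n^{5-3}=n^2$.) So the plan is to prove $m_3(n,6)=O(n^3)$; by Lemma~\ref{lem:transversal} it suffices to bound $\Mt(G,6)$ for a $6$-partite triangle-free graph $G$ with parts $V_1,\dots,V_6$ on at most $n$ vertices.

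\textbf{The determination trick.} Let $H$ be the graph on $[6]$ in which $ij$ is an edge whenever some vertex of $V_i$ has a neighbour in $V_j$. If $S\sub [6]$ is an independent set of $H$, then in any transversal $6$-MIS $(v_1,\dots,v_6)$ the coordinates $\{v_x:x\in S\}$ are forced by the coordinates $\{v_j:j\notin S\}$: for $x\in S$ and any $w\in V_x$, maximality forces $w$ to be adjacent to some $v_j$, and since $x$ has no $H$-neighbour inside $S$ we must have $j\notin S$, so $v_x$ is the unique vertex of $V_x$ avoided by $\bigcup_{j\notin S}N(v_j)$. Hence $\Mt(G,6)\le\prod_{j\notin S}|V_j|\le n^{6-|S|}$, and we are done whenever the independence number $\al(H)\ge 3$.

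\textbf{The dense cases.} There remain finitely many graphs $H$ on $6$ vertices with $\al(H)\le 2$; by Ramsey's theorem ($R(3,3)=6$) every such $H$ contains a triangle, so the parts involved interact in a genuinely triangle-free way and the determination trick alone cannot finish. When $\al(H)=2$, fix a non-edge $\{1,2\}$ of $H$; the trick shows every transversal MIS is determined by its projection to $V_3\times V_4\times V_5\times V_6$, and we must show only $O(n^3)$ such quadruples arise. Each arising quadruple $(v_3,v_4,v_5,v_6)$ is a transversal independent set of the $4$-partite triangle-free graph $G[V_3\cup\dots\cup V_6]$ which extends uniquely to a $6$-MIS, and the plan is to bound their number by an iterated Cauchy--Schwarz argument in the spirit of Proposition~\ref{prop:k5}: fix $v_4$, and for each choice control the triples $(v_3,v_5,v_6)$ via a disjointness statement like Claim~\ref{cl:k5}, forced by triangle-freeness of the link of $v_4$ together with the maximality constraints inherited from parts $1$ and $2$. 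The case $H=K_6$ has no non-edge to exploit, and here one must instead locate a coordinate controlled by only two others through the distinguishing-set machinery of Lemma~\ref{lem:tripartite} (as in Claim~\ref{cl:d-exist}), and then iterate.

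\textbf{The main obstacle.} The entire difficulty lies in the $\al(H)\le 2$ regime, above all $H=K_6$ (and its near-relatives $K_{2,2,2}$ and the triangular prism). Proposition~\ref{prop:k5} attains the conjectured exponent for $k=5$ only under a hypothesis equivalent to the shadow of the relevant partition having the largest possible independence number (all edges inside a $C_5$-blowup), and it is exactly the passage to dense shadows that is missing: once no coordinate is freely determined, the per-coordinate arguments become circular, and there is no available structural description of the triangle-free graphs maximising $\Mt(G,k)$ for $k\ge 5$ on which to base a case analysis. A proof would most plausibly require either a stability input --- showing that a near-extremal $G$ must essentially be a blowup of a tight cycle --- or a global double-counting/entropy argument that sidesteps the classification of $H$ entirely. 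Note finally that even a complete solution of the $k=5$ case would not by itself suffice, since it yields only $m_3(n,6)\le O(n)\cdot m_3(n,5)=O(n^{7/2})$, still short of $O(n^3)$, so a direct attack on $k=6$ really is necessary.
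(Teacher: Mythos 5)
This statement is a conjecture, not a theorem: the paper poses it as open and proves only the weaker bound $m_3(n,k)=O(n^{k-2})$ (by iterating $m_3(n,4)=O(n^2)$), so there is no paper proof to compare against. And your submission is, by your own account, not a proof either — you flag the $\al(H)\le 2$ regime, above all $H=K_6$, as unresolved, and you are right that nothing you wrote closes it. What you have is a research outline with an honestly identified hole, not an argument.

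The parts that do hold up are worth naming, because they are genuinely sound reductions. If $m_3(n,6)=O(n^3)$, then fixing $k-6$ vertices of a $k$-MIS and applying the $6$-MIS bound to the induced subgraph on their common non-neighbours (where the remaining six vertices do form a $6$-MIS) gives $m_3(n,k)=O(n^{k-3})$ for all $k\ge 6$; the observation that $k=5$ is ruled out by the $\Om(n^{5/2})$ lower bound of Theorem~\ref{thm:constructions}(b) is correct; Lemma~\ref{lem:transversal} does reduce to transversal MIS's in a $6$-partite graph; and the determination trick is right — an independent set $S$ of size $3$ in the interaction graph $H$ forces the coordinates in $S$ from the other three, because for $x\in S$ maximality and the absence of edges from $V_x$ into $\bigcup_{y\in S\sm\{x\}}V_y$ leave a unique choice, giving at most $n^3$ transversal MIS's. (One caveat: since the conjecture asks only for \emph{some} $k>4$, your ``reduction to $k=6$'' is not an equivalence — the conjecture could in principle hold at some $k\ge 7$, where the target $n^{k-3}$ is well above the believed truth $n^{k/2}$, without $m_3(n,6)=O(n^3)$ being known — but $k=6$ is certainly the natural place to aim.) The genuine gap is exactly where you put it: once $\al(H)\le 2$, Ramsey gives you a triangle in $H$ but no coordinate is freely determined, the disjointness/Cauchy--Schwarz scheme of Proposition~\ref{prop:k5} has only been carried out under a very restrictive sparsity hypothesis on the interaction pattern, and the distinguishing-set machinery of Lemma~\ref{lem:tripartite} (Claim~\ref{cl:d-exist} and its relatives) has not been pushed past the tripartite case. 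That missing step is the open problem itself, and a vague plan to ``iterate'' it is not a substitute for an argument.
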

	We note that it is easy to prove $m_3(n,k)=O(n^{k-2})$ for all $k\ge 4$ by using  $m_3(n,4)=O(n^2)$ from Theorem~\ref{thm:triangle-free}.
	
	As a consequence of Proposition~\ref{prop:k5}, any $G$ which is the subgraph of a blowup of $C_5$ has at most $O(n^{5/2})$ 5-MIS's.  One might be able to extend these ideas to work when $G$ is the subgraph of a blowup of any graph $K$ which is triangle-free.
	\begin{conj}
	Let $G$ be an $n$-vertex $k$-partite graph on $V_1\cup \cdots \cup V_k$.  Assume there exist no indices $a,b,c$ such that there exist edges in $V_{a}\cup V_b,\ V_b\cup V_c$, and in $V_a\cup V_c$.  Then $G$ contains at most $O(n^{k/2})$ $k$-MIS's.
	\end{conj}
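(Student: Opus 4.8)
The plan is to first reduce to counting \emph{transversal} $k$-MIS's within the same class of graphs, and then to induct on $k$. Write $K$ for the \emph{reduced graph} of $G$, i.e.\ the graph on $[k]$ with $ij\in E(K)$ iff some edge of $G$ joins $V_i$ to $V_j$; the hypothesis says $K$ is triangle-free. For the reduction I would run the random-refinement argument from the proof of Lemma~\ref{lem:transversal}: split the $k$-MIS's of $G$ according to the vector $(|I\cap V_i|)_{i\in[k]}$ of ``types'', for a given type discard the parts it does not meet, and refine each remaining part $V_i$ uniformly at random into $|I\cap V_i|$ sub-parts. A $k$-MIS of a fixed type then becomes transversal with probability at least $k^{-k}$, costing only a factor $O_k(1)$, and --- crucially --- refining a part never creates a triangle in the reduced graph (two sub-parts of one $V_i$ are non-adjacent, and a triangle among sub-parts of distinct $V_i,V_j,V_\ell$ would give a triangle $ij\ell$ in $K$), so the new reduced graph is again triangle-free. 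Hence it suffices to prove $\Mt(G,k)=O_k(n^{k/2})$ whenever $G$ is $k$-partite with triangle-free reduced graph $K$.

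I would next record a basic functional dependency: if $I=(x_1,\dots,x_k)$ is a transversal $k$-MIS of $G$, then maximality at $V_i$ forces $V_i\sm\bigcup_{\ell:\,i\ell\in E(K)}N_G(x_\ell)=\{x_i\}$ for every non-isolated $i$ (and an isolated part must be a single vertex), so $x_i$ is determined by $(x_\ell)_{\ell\sim_K i}$. It follows that whenever $W\sub[k]$ induces an independent set in $K$, the coordinates $(x_i)_{i\notin W}$ determine $I$, giving $\Mt(G,k)\le n^{k-\alpha(K)}$. This already proves the conjecture whenever $\alpha(K)\ge k/2$ --- in particular whenever $K$ is bipartite (even cycles, paths, complete bipartite reduced graphs, \dots) --- and lets us reduce to connected $K$ with no isolated vertex (the count multiplies over components and $k/2$ is additive).

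The real content is the case of connected triangle-free $K$ with $\alpha(K)<k/2$, the smallest instance being $K=C_5$, treated in Proposition~\ref{prop:k5}; there one has to beat $n^{k-\alpha(K)}$ and recover a $\sqrt n$-type saving. My plan is to induct on $k$: peel off leaves of $K$ (a leaf coordinate is determined by its unique neighbor), split into blocks, and so reduce to $2$-connected triangle-free $K$ of minimum degree at least $2$; for such $K$, fix a pivot vertex $b$, fix the coordinates $(x_\ell)_{\ell\in N_K(b)}$ (which forces $x_b$), and parametrize the MIS's that extend this choice by a product set living in the parts far from $b$ --- exactly as the set $D_{a,b,c}\times E_{a,b,c}\sub V_4\times V_5$ in Proposition~\ref{prop:k5} --- then show these product sets are pairwise disjoint and bootstrap through nested Cauchy--Schwarz as in \eqref{eq:tbornottb}.

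The hard part will be this disjointness step. Claim~\ref{cl:k5} is proved by a delicate case analysis using very specific features of the $5$-cycle (the two ``far'' parts are adjacent, and each is adjacent to exactly one of the two ``near'' parts), and it is not clear which pivot, which ``near/far'' partition of $[k]$, or which product structure to use for a general $2$-connected triangle-free $K$, nor that a clean disjointness statement even survives; already $K=C_7$ appears to need a new idea. A second, smaller obstacle is that deleting a leaf-part does not leave an honest transversal-MIS counting problem but a \emph{relaxed} one (parts of the leaf's neighbor may be dominated from outside), so the induction must carry a strengthened hypothesis tracking this; done naively, the leaf step loses a factor $\sqrt n$. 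These two points together are, I suspect, why the statement is still only a conjecture.
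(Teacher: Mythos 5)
This statement is labeled as a \emph{conjecture} in the paper, and the paper does not prove it; the only supporting evidence given there is Proposition~\ref{prop:k5}, which handles the single instance where the reduced graph $K$ is $C_5$ (possibly with two extra chords that do not create a triangle). So there is no paper proof to compare against, and your write-up correctly reflects that: you give a sound reduction, a clean partial result, and an honest account of where the argument stalls.

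The parts you do prove are correct. The random-refinement reduction to transversal $k$-MIS's goes through, and your observation that the refined reduced graph is again triangle-free is the right thing to check (this is slightly different from Lemma~\ref{lem:transversal}, which only uses that $G$ is triangle-free; here you genuinely need $K$ triangle-free to be preserved, and your argument shows it is). The functional-dependency bound $\Mt(G,k)\le n^{k-\alpha(K)}$ is a nice, correct observation that goes a bit beyond what the paper records explicitly; it dispatches all bipartite $K$ and lets you focus on connected triangle-free $K$ with $\alpha(K)<k/2$. The genuine gap is exactly where you flag it: Claim~\ref{cl:k5} and the product-set/Cauchy--Schwarz bootstrap in Proposition~\ref{prop:k5} exploit very particular features of $C_5$ (a single pivot whose two ``far'' parts are mutually adjacent and each adjacent to exactly one ``near'' part), and no analogue of the disjointness statement is known for larger $2$-connected triangle-free $K$, not even $C_7$. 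You also correctly note that leaf-peeling does not land you back in an honest transversal-MIS problem but in a relaxed domination problem, so the induction as stated does not close. These are the same obstructions the authors face, which is why the statement remains open; your proposal should not be read as a proof but as an accurate map of what is known and what is missing.
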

	
	Turning our attention to larger $t$, we ask the following question.
	\begin{quest}
	Is it true that for all $t\ge 3$,
	\[m_t(n,t)=O(n^{t-2}).\]
	\end{quest}
	If this is true, then this together with Proposition~\ref{prop:smallK} can be used to show that the lower bounds of Theorem~\ref{thm:constructions}(a) are tight up to the $o(1)$ term for all $k<2(t-1)$.  We note that Theorem~\ref{thm:triangle-free} implies this is true for $t=3$, and it is possible that similar ideas could be used to handle larger values of $t$.
	
	Lastly, it is natural to ask if the $o(1)$ terms in the bounds of Theorem~\ref{thm:constructions} are necessary.  The simplest case of this question is the following.
	\begin{quest}\label{quest:K4}
	Is it true that
	\[m_4(n,3)=n^{2-o(1)}.\]
	\end{quest}
	The following partial result is easy to show.
	\begin{prop}
	    If $G$ is an $n$-vertex tripartite graph, then $G$ has at most $n^{2-o(1)}$ 3-MIS's.
	\end{prop}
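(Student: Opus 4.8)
The plan is to sort the $3$-MIS's of $G$ according to how they meet the tripartition $V_1\cup V_2\cup V_3$, show that all but $O(n)$ of them are transversal, and then control the transversal ones via the triangle removal lemma. Since $G$ is tripartite each part $V_i$ is an independent set, which is what makes the non-transversal sets scarce.

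First I would dispose of the non-transversal $3$-MIS's. Suppose $I$ is a $3$-MIS with $|I\cap V_i|\ge 2$ for some $i$, say $\{a_1,a_2\}\subseteq I\cap V_1$. As $V_1$ is independent, every $a'\in V_1\setminus\{a_1,a_2\}$ is non-adjacent to both $a_1$ and $a_2$, so maximality forces $a'$ to be adjacent to the remaining vertex of $I$; in particular $I$ has a third vertex $b\notin V_1$ and $\{a_1,a_2\}=V_1\setminus N(b)$. Thus $I$ is determined by $b$, so for each choice of which part is ``doubled'' there are at most $n$ such sets, giving $O(n)$ non-transversal $3$-MIS's in all (the case $|I\cap V_i|=3$ forces $|V_i|=3$ and contributes $O(1)$).

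Next I would treat the transversal $3$-MIS's by passing to the ``tripartite complement'' $G'$ of $G$, exactly as in the proof of Lemma~\ref{lem:conGowers}: $u\in V_i$ and $v\in V_j$ with $i\ne j$ are adjacent in $G'$ iff $uv\notin E(G)$. A transversal triple $\{a,b,c\}$ is independent in $G$ iff it is a triangle of $G'$, and — arguing as in Lemma~\ref{lem:conGowers} — it is maximal in $G$ iff each of the edges $ab,bc,ca$ lies in exactly one triangle of $G'$. Consequently the triangles of $G'$ corresponding to transversal $3$-MIS's are pairwise edge-disjoint; letting $G''$ be the union of their edges, every edge of $G''$ lies in exactly one triangle of $G''$ and these triangles are precisely the ones being counted, so the number of transversal $3$-MIS's equals $\tfrac13|E(G'')|$.

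Finally I would invoke the triangle removal lemma of Ruzsa and Szemer\'edi~\cite{ruzsa1978}: a graph on $n$ vertices in which every edge lies in exactly one triangle has $o(n^2)$ edges, since removing a single edge destroys exactly one of the (edge-disjoint) triangles, so destroying all $\tfrac13|E(G'')|$ of them needs $\ge \tfrac13|E(G'')|$ edge deletions, which must be $o(n^2)$. Hence $G$ has at most $o(n^2)+O(n)=n^{2-o(1)}$ $3$-MIS's, as claimed; with the quantitative form of the removal lemma one gets the sharper $n^2e^{-\Omega(\sqrt{\log n})}$-type bound. The only nontrivial ingredient is the removal lemma, and this is necessary: the reduction to the locally linear graph $G''$ is the reverse of the construction behind Theorem~\ref{thm:constructions}, so the Gowers--B.~Janzer construction shows the $n^{o(1)}$ loss cannot be removed.
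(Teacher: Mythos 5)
Your proof is correct and follows essentially the same route as the paper's sketch: dispose of the $O(n)$ non-transversal $3$-MIS's, pass to the tripartite complement $G'$, observe that transversal MIS's correspond to triangles of $G'$ each of whose edges lies in a unique triangle, and invoke the Ruzsa--Szemer\'edi/triangle-removal machinery to bound the number of such edges by $n^{2-o(1)}$. If anything you are slightly more careful than the paper in defining $G''$ as exactly the union of the MIS-triangles (so that it is literally a locally linear graph) and in spelling out the $O(n)$ bound on non-transversal MIS's, but the underlying ideas are identical.
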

	\begin{proof}[Sketch of Proof]
	It is not difficult to show that $G$ has at most $O(n)$ 3-MIS's which are not transversal, so it suffices to upper bound the number of transversal MIS's.  Let $G'$ be the graph on the same vertex set of $G$ where an edge is added between two vertices $u,v$ if and only if they are in different parts and if $uv\notin E(G)$.  It is not difficult to see that if $\{u,v,w\}$ is a transversal MIS of $G$, then $uvw$ is a triangle in $G'$, and moreover it is the unique triangle containing each of $uv,vw,uw$.  Let $G''\sub G'$ be the subgraph consisting of every edge which is in a (unique) triangle.  It is a well known consequence of the regularity lemma that such a graph $G''$ has at most $n^{2-o(1)}$ edges, i.e. $G$ has at most $n^{2-o(1)}$ pairs which lie in a trasnversal MIS.  From this the result follows.
	\end{proof}
	Using similar ideas it may be possible to positively answer Question~\ref{quest:K4} for $K_4$-free graphs in general.  We emphasize that in this setting Lemma~\ref{lem:transversal} does not apply since our graph is not triangle-free, so it is not enough to simply solve the problem in the tripartite case.
	
	\bibliographystyle{abbrv}
	\bibliography{MIS}
\end{document}